\newtheorem{theorem}{Theorem}[section]
\newtheorem{lemma}[theorem]{Lemma}
\newtheorem{prop}[theorem]{Proposition}
\newtheorem{coro}[theorem]{Corollary}
\theoremstyle{definition}
\newtheorem{definition}[theorem]{Definition}
\theoremstyle{remark}
\newtheorem{remark}[theorem]{Remark}
\numberwithin{equation}{section}
\newcommand{\leqnomode}{\tagsleft@true}
\begin{document}

\begin{abstract}
   We are interested in the classification of left-invariant symplectic structures on Lie groups. Some classifications are known, especially in low dimensions. In this paper we establish a new approach to classify (up to automorphism and scale) left-invariant symplectic structures on Lie groups. The procedure is based on the moduli space of left-invariant nondegenerate $2$-forms. Then we apply our procedure for two particular Lie groups of dimension $2n$ and give classifications of left-invariant symplectic structures on them. 
\end{abstract}
\title[A classification of left-invariant symplectic structures]{A classification of left-invariant symplectic structures on some Lie groups}

\author{Luis Pedro Castellanos Moscoso}
\address[L. Castellanos]{Department of Mathematics, Osaka City University, 
Sugimoto, Sumiyoshi-ku, Osaka City, Japan 585-8585}
\email{caste3.1416@gmail.com}
\thanks{
This work was partly supported by Osaka City University Advanced 
Mathematical Institute (MEXT Joint Usage/Research Center on Mathematics 
and Theoretical Physics JPMXP0619217849). The second author was supported by JSPS KAKENHI Grant Number 19K21831. 
} 

\author{Hiroshi Tamaru}
\address[H.~Tamaru]{Department of Mathematics, Osaka City University, 
Sugimoto, Sumiyoshi-ku, Osaka City, Japan 585-8585} 
\email{tamaru@sci.osaka-cu.ac.jp}







\maketitle


\section{Introduction}
In geometry it is an important problem to study whether a given manifold admits some nice geometric structures. In the setting of Lie groups, it is natural to ask about the existence of left-invariant structures. A \textit{symplectic Lie group} is a Lie group $G$ endowed with a left-invariant symplectic form $\omega$ (that is, a nondegenerate closed $2$-form). The geometry of symplectic Lie groups is an active field of research. There are many interesting results on the structure of symplectic Lie groups and some classifications in low dimensions, but the general picture is far from complete. Conjectures about the existence of isotropic subgroups are still unsolved. Some nice known results include

\begin{enumerate}
    \item Unimodular symplectic Lie groups are solvable. All symplectic Lie groups of dimension $4$ are solvable (\cite{SymplecticHom}).
    \item Some of the known classification in low dimensions include: complete classification for the $4$-dimensional case (\cite{Ovando}), filiform Lie algebras up to dimension $10$  (\cite{Filiform}), most of solvable Lie algebras up to dimension $6$ (\cite{Nilpotent},\cite{Solvable}). 
    \item Completely solvable symplectic Lie groups have Lagrangian subgroups. If dimension $\leq 6$, every symplectic Lie group has a Lagrangian
subgroup. All nilpotent symplectic Lie groups of dimension $\leq 6$ have a Lagrangian normal subgroup (\cite{SymplecticLieGroups}).
\end{enumerate}

In \cite{TakahiroHashinaga2016} we can find a novel method to find nice (e.g., Einstein or Ricci soliton) left-invariant Riemannian metrics. The method is based on the moduli space of left-invariant Riemannian metrics on a Lie group $G$. Recall that there is a correspondence between left-invariant metrics on $G$ and inner products on its Lie algebra $\mathfrak{g}$.  The moduli space is defined as the orbit space of the action of $\mathbb{R}^{\times} \textrm{Aut} (\mathfrak{g})$ on the space $\tilde{\mathfrak{M}}(\mathfrak{g})$ of inner products on $\mathfrak{g}$. An expression of the moduli space derives a \textit{Milnor-type theorem}. Milnor-type theorems, first introduced in the aforementioned paper, can be thought as a generalization of so called Milnor frames introduced by Milnor in his famous paper \cite{Milnorpaper}.  In the setting of Riemannian metrics a Milnor-type theorem gives an orthonormal basis for each inner product $\langle, \rangle$ in $\mathfrak{g}$, such that the bracket relations between the elements of the basis are given in terms of a small number of parameters. Therefore they can be used, for example, to calculate curvatures in a relatively easy way.  In \cite{kubo2016} the same ideas are used successfully in the pseudo-Riemannian case.  

It would be then natural to try to use the same ideas for symplectic Lie groups. In a similar way as before, the study of symplectic Lie groups reduces  
to the study of \textit{symplectic Lie algebras} $(\mathfrak{g}, \omega)$, that is Lie algebras $\mathfrak{g}$ endowed with nondegenerate closed $2$-forms (or equivalently  two-cocycles $\omega \in Z^2(\mathfrak{g})$). We study the moduli space of left-invariant nondegenerate $2$-forms, which is the orbit space of the action of $\mathbb{R}^{\times} \textrm{Aut} (\mathfrak{g})$ on the space $\Omega(\mathfrak{g})$ of nondegenerate $2$-forms on $\mathfrak{g}$. Then we derive a procedure to obtain Milnor-type theorems. In the setting of symplectic Lie groups a Milnor-type theorem gives a symplectic basis for each nondegenerate $2$-form in $\mathfrak{g}$, such that the bracket relations between the elements of the basis are given in terms of a small number of parameters. We show how these theorems can be used to search for $2$-forms that are closed. We also use our method to show the existence of some particular subalgebras. These ideas are developed in Section~\ref{moduli}. We would like to emphasize that the approach presented in this paper is different from that in the existing literature. In addition, the method can be used, at least theoretically, for any Lie algebra.

As a first application of this method, in Section~\ref{examples}, we study two particular Lie algebras: the Lie algebra of the real hyperbolic space $\mathfrak{g}_{\mathbb{R}\mathrm{H}^{2n}} $, and the direct sum of the $3$-dimensional Heisenberg Lie algebra and the abelian Lie algebra $\mathfrak{h}^3\bigoplus \mathbb{R}^{2n-3}$. These Lie algebras have particularly big automorphism groups: they are parabolic subgroups of $\textrm{GL}(2n,\mathbb{R})$. Therefore, we expect the corresponding moduli spaces to be small, which is a reason of our choice of these Lie algebras. In fact, in \cite{wolf} Wolf proved that for the actions of parabolic subgroups on symmetric spaces of reductive type (which is our case), the number of orbits is finite. For the first Lie algebra, we will show in Section~\ref{Group1} that the action considered is transitive, and thus the moduli space consists of just one element. Correspondingly, we obtain that there is only one symplectic structure when $n=1$ and no symplectic structure if $n>1$. For the second Lie algebra we will show in Section~\ref{Group2} that the action considered has at most five orbits (depending on $n$). Correspondingly, we obtain that for the second Lie algebra there is only one symplectic structure for all $n>0$. Furthermore we show that the second Lie algebra admits Lagrangian ideals for any $n>0$.  

As a tool for studying the moduli space we also obtain in Section~\ref{Symplecticdecomposition} a slight modification of a decomposition theorem of symplectic matrices called symplectic QR decomposition.

The authors would like to thank Takayuki Okuda, Akira Kubo, Yuichiro Taketomi, Kaname Hashimoto, Yuji Kondo and Masahiro Kawamata for helpful comments. 

\section{Preliminaries}\label{Theory}
In this section, we recall some basic notions on left-invariant symplectic forms on Lie groups, and Lagrangian normal subgroups.
\subsection{Left-invariant symplectic $2$-forms}Let $G$ be a simply connected Lie group with dimension $2n$ and $\mathfrak{g}$ its corresponding Lie algebra. We are interested in the space of left-invariant symplectic forms on $G$:
\[\left\{ \omega(\cdot{,}\cdot) \in \bigwedge\nolimits^{\!2} T^*G \mid \omega^n\neq 0, \text{left-invariant}, d\omega=0 \right\}.
\]
This is a subset of the set of all nondegenerate left-invariant $2$-forms, denoted by
\[\Omega\left(G\right):= \left\{ \omega(\cdot{,}\cdot) \in \bigwedge\nolimits^{\!2} T^*G \mid \omega^n\neq 0, \text{left-invariant}  \right\}.
\]
For this set we have the following natural equivalence relation.
\begin{definition}\label{symplectomorphism}
Let $\omega_1$,$\omega_2\in\Omega\left(G\right)$. Then, $(G,\omega_1)$ and $(G,\omega_2)$ are said to be \textit{equivalent up to automorphism} (resp. \textit{equivalent up to automorphism and scale}) if there exists $\phi \in \mathrm{Aut(G)}$ such that $\phi^{*}\omega_1=\omega_2$ (resp. if there exist $\phi \in \mathrm{Aut(G)}$ and a constant $c\neq0$ such that $c\cdot(\phi)^{*}\omega_1=\omega_2$). 
\end{definition}
It is well known that the space $\Omega\left(G\right)$ can be identified with the space of nondegenerate $2$-forms on $\mathfrak{g}$, denoted by  

\[\Omega(\mathfrak{g}):= \left\{ \omega\left(\cdot{,}\cdot\right) \in \bigwedge\nolimits^{\!2} \mathfrak{g}^* \mid \omega^n\neq 0  \right\}.  
\]
For this set we have the following natural equivalence relation. 
\begin{definition}\label{symplectomorphismlie}
Let $\omega_1, \omega_2  \in \Omega (\mathfrak{g})$. Then, $(\mathfrak{g},\omega_1)$ and $(\mathfrak{g},\omega_2)$ are said to be \textit{equivalent up to automorphism} (resp. \textit{equivalent up to automorphism and scale}) if there exists $\phi \in \mathrm{Aut(\mathfrak{g})}$ such that $\phi^{*}\omega_1=\omega_2$ (resp. if there exist $\phi \in \mathrm{Aut(\mathfrak{g})}$ and a constant $c\neq0$ such that $c\cdot(\phi)^{*}\omega_1=\omega_2$). 
\end{definition}
When the Lie group is simply connected both notions Definition~\ref{symplectomorphism} and Definition~\ref{symplectomorphismlie} of equivalence coincide. This fact allows us to work at the Lie algebra level. 
\begin{remark}
If $(S,\omega_1)$ and $(S,\omega_1)$ are symplectic manifolds and there exists $\phi \in \mathrm{Diff}(S)$ such that $\phi^*\omega_1=\omega_2$, then $(S,\omega_1)$ and
$(S,\omega_2)$ are said to be symplectomorphically equivalent and $\phi$ is called a symplectomorphism. Notice that the equivalence relation in Definition~\ref{symplectomorphism} (and the corresponding notion in Definition~\ref{symplectomorphismlie}) is stronger, but this would be the usual notion of equivalence in symplectic Lie groups. In fact, in the context of symplectic Lie groups, the map in Definition \ref{symplectomorphism} or Definition \ref{symplectomorphismlie} is also sometimes called a symplectomorphism. 
\end{remark}

Remember that a symplectic vector space is a pair $(V,\omega)$, where $V$ is a vector space and $\omega$ is a nondegenerate $2$-form. For every $\omega_{\mathfrak{g}} \in \Omega(\mathfrak{g})$ the pair $(\mathfrak{g},\omega_{\mathfrak{g}})$ is a symplectic vector space.  The next is a well known fact.

\begin{prop}\label{closedcondition} 
Let $\omega_{\mathfrak{g}} \in \Omega(\mathfrak{g})$, and $\omega_G\in \Omega(G)$ be the corresponding $2$-form on the Lie group. Then $\omega_G$ is closed if and only if $\omega_{\mathfrak{g}}$ satisfies, for all $x,y,z \in \mathfrak{g}$
\[
d\omega_{\mathfrak{g}}(x,y,z):=\omega_{\mathfrak{g}} (x,[y,z])+ \omega_{\mathfrak{g}} (z,[x,y])+\omega_{\mathfrak{g}} (y,[z,x])=0. 
\]
\end{prop}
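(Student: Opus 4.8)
The plan is to identify the exterior derivative of the left-invariant form $\omega_G$ with the Chevalley--Eilenberg differential by applying the intrinsic (Koszul) formula for $d$ to left-invariant vector fields and exploiting left-invariance to annihilate all the directional-derivative terms, leaving exactly the algebraic expression in the statement.

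First I would fix the standard correspondence that underlies the whole setup: every element $x\in\mathfrak{g}$ is the value at the identity $e$ of a unique left-invariant vector field $X$ on $G$, this assignment is a Lie algebra isomorphism onto the left-invariant fields (so $[X,Y]$ corresponds to $[x,y]$), and $\omega_G$ is the left-invariant $2$-form with $(\omega_G)_e=\omega_{\mathfrak{g}}$. The key computational input is then the Koszul formula applied to left-invariant fields $X,Y,Z$ attached to $x,y,z\in\mathfrak{g}$:
\[
d\omega_G(X,Y,Z)=X\,\omega_G(Y,Z)-Y\,\omega_G(X,Z)+Z\,\omega_G(X,Y)-\omega_G([X,Y],Z)+\omega_G([X,Z],Y)-\omega_G([Y,Z],X).
\]

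The crucial observation is that, because $\omega_G$ and each of $X,Y,Z$ are left-invariant, the function $p\mapsto (\omega_G)_p(Y_p,Z_p)$ is \emph{constant} on $G$, equal to $\omega_{\mathfrak{g}}(y,z)$ at every point; the same holds for the other two scalar functions. Hence the three directional-derivative terms vanish identically, and one is left with the purely algebraic sum $-\omega_G([X,Y],Z)+\omega_G([X,Z],Y)-\omega_G([Y,Z],X)$. Evaluating this at $e$ and using $[X,Y]_e=[x,y]$ together with the antisymmetry of $\omega_{\mathfrak{g}}$ (so that, e.g., $-\omega_{\mathfrak{g}}([x,y],z)=\omega_{\mathfrak{g}}(z,[x,y])$ and $\omega_{\mathfrak{g}}([x,z],y)=\omega_{\mathfrak{g}}(y,[z,x])$) rearranges the three terms into
\[
\omega_{\mathfrak{g}}(x,[y,z])+\omega_{\mathfrak{g}}(z,[x,y])+\omega_{\mathfrak{g}}(y,[z,x])=d\omega_{\mathfrak{g}}(x,y,z).
\]

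To conclude, I would note that $d\omega_G$ is again a left-invariant $3$-form and that the left-invariant fields span the tangent space at each point; therefore $d\omega_G=0$ holds on all of $G$ if and only if $d\omega_G(X,Y,Z)$ vanishes at $e$ for all triples coming from $\mathfrak{g}$, which by the identity above is precisely the condition $d\omega_{\mathfrak{g}}\equiv 0$. This gives both implications at once. The argument involves no real obstacle beyond careful bookkeeping: the only points demanding attention are getting the signs in the Koszul formula correct and then reindexing via antisymmetry to match the stated ordering of the three summands, while the single conceptual ingredient is the constancy of $\omega_G$ on left-invariant fields.
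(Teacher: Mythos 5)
Your proof is correct. There is, however, nothing in the paper to compare it against: the paper states this proposition with the phrase ``The next is a well known fact'' and gives no proof whatsoever, so your argument simply supplies the standard justification the paper implicitly invokes. Your three steps are exactly the canonical ones --- the intrinsic formula for $d$ on a $2$-form evaluated on left-invariant fields, the vanishing of the three directional-derivative terms because $p \mapsto (\omega_G)_p(Y_p,Z_p)$ is constant, and the reduction of closedness on all of $G$ to the identity element via left-invariance of $d\omega_G$ --- and your sign bookkeeping and reindexing by antisymmetry do reproduce the stated expression for $d\omega_{\mathfrak{g}}(x,y,z)$ exactly. (One cosmetic remark: even under conventions that normalize the intrinsic formula for $d$ by a constant factor, the vanishing statement, hence the equivalence, is unaffected.)
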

A $2$-form $\omega_{\mathfrak{g}} \in \Omega(\mathfrak{g})$ that satisfies the previous property will be called a \textit{closed $2$-form or symplectic form} on the Lie algebra $\mathfrak{g}$. 

\begin{remark}
The previous condition can be expressed in terms of the cohomology of Lie algebras. One knows that $\omega_{\mathfrak{g}}$ is closed if and only if $\omega \in Z^2(\mathfrak{g})$, where $Z^2(\mathfrak{g})$ is the set of 2-cocyles in the trivial representation over $\mathbb{R}$. 
\end{remark}{}
From the theory of homogeneous spaces we also have the identification
\[
\Omega (\mathfrak{g}) \cong \mathrm{GL(2n,\mathbb{R})} / \mathrm{Sp_n}(\mathbb{R}).
\]
Here we identify $\mathfrak{g}\cong \mathbb{R}^{2n}$, and then the general linear group $\mathrm{GL}(2n,\mathbb{R})$ acts transitively on $\Omega\left(\mathfrak{g}\right)$ by 
\[
g.\omega(\cdot{,}\cdot)=\omega(g^{-1}(\cdot) {,} g^{-1}(\cdot)) \; \; \; \forall g\in \mathrm{GL}(2n,\mathbb{R}).
\] 
We also recall that $\mathrm{Sp_n}(\mathbb{R})$ is the symplectic group, that is the group of linear maps which preserve the canonical symplectic $2$-form $\omega_0$ in $\mathbb{R}^{2n}$. If $\lbrace e_1,\ldots,e_{2n}\rbrace$ is the canonical basis in $\mathbb{R}^{2n}$ and $\lbrace \varepsilon^1,\ldots,\varepsilon^{2n}\rbrace$ is the corresponding dual basis, then the canonical symplectic $2$-form is given by 
\[
\omega_0:=\varepsilon^1\wedge \varepsilon^{n+1}+\cdots+\varepsilon^n\wedge \varepsilon^{2n}.
\]
Then the group $\mathrm{Sp_n}(\mathbb{R})$ can be described as 
\[
\mathrm{Sp_n}(\mathbb{R}):=\left\{ A \in \mathrm{GL}(2n,\mathbb{R}) \mid {}^tAJA=J \right\}
\]
where $J:= \begin{pmatrix}
0 & I_n \\
-I_n & 0
\end{pmatrix}$.
\begin{remark}
The symplectic group $\mathrm{Sp_n}(\mathbb{R})$ is itself a Lie group, but this Lie group is not the definition of ``symplectic Lie group''.
\end{remark}

We wish to study the set $\Omega\left(\mathfrak{g}\right)$ and use Proposition~\ref{closedcondition} to search for $2$-forms that are closed, but this set can be rather big so we introduce the concept of the \textit{moduli space} in Section~\ref{moduli}. 

\subsection{Subalgebras of symplectic Lie algebras}
In this subsection, we recall the definition of some particular subalgebras of symplectic Lie algebras and make a brief comment about their importance. For details, we refer to \cite{SymplecticLieGroups} and references within. 

In the context of symplectic Lie algebras, isotropic and Lagrangian subalgebras are of particular importance (at the Lie group level these correspond to isotropic and Lagrangian subgroups, respectively).  Given a subspace $W$ of a symplectic vector space $(V,\omega)$,  recall that the orthogonal space $W^{\perp}$ with respect to $\omega$ is defined by 
\[W^{\perp}:=\left\{ v \in V \mid \omega(v,w)=0  \; \text{for all} \; w \in W   \right\}.
\]
\begin{definition}
Let $(\mathfrak{g},\omega)$ be a symplectic Lie algebra. A subalgebra $\mathfrak{l}\subset g$ is said to be \textit{isotropic} (resp. \textit{Lagrangian}) if it satisfies $\mathfrak{l}\subset \mathfrak{l}^{\perp}$ (resp. $\mathfrak{l}=\mathfrak{l}^{\perp}$). 
\end{definition}

For example, the theory of \textit{symplectic reduction} with respect to isotropic ideals has been used to study the structure of symplectic Lie algebras. Lie algebras that do not have a symplectic reduction (called \textit{irreducible}) or those that can be reduced to a trivial Lie algebra (\textit{called completely reducible}) are of particular interest. In particular, a classifications exists for irreducible symplectic Lie algebras. Lagrangian ideals also appear in connection with flat Lie algebras, as every symplectic Lie algebra with a Lagrangian ideal can be constructed as Lagrangian extension of a flat Lie algebra. 

In low dimensions several results with respect to the existence of isotropic and Lagrangian subalgebras are known, but in general the question is still not satisfactorily answered. 

\section{The moduli spaces}\label{moduli}
In this section, we define the moduli space of left-invariant nondegenerate $2$-forms on a Lie group, and formulate the procedure to obtain a kind of generalization of the Milnor frames.
\subsection{The definition}
Consider the automorphism group of $\mathfrak{g}$ defined by 
\[
\mathrm{Aut}(\mathfrak{g}):=\left\{ \phi \in \mathrm{GL}(2n,\mathbb{R}) \mid \phi [\cdot{,}\cdot] = [\phi (\cdot), \phi(\cdot)] \right\}.
\]
Also define $\mathbb{R}^{\times}:=\mathbb{R} \setminus 0$. Then we can consider the set 
\[
\mathbb{R}^{\times}\mathrm{Aut}(\mathfrak{g}):=\left\{ \phi \in \mathrm{GL}(2n,\mathbb{R}) \mid \phi \in \mathrm{Aut}(\mathfrak{g}), c \in \mathbb{R}^{\times} \right\},
\]
which is a subgroup of $\mathrm{GL}(2n,\mathbb{R})$. Hence it naturally acts on $\Omega\left(\mathfrak{g}\right)$. We can then consider the orbit space of this action.

\begin{definition}\label{mudulispace}
The orbit space of the action of $\mathbb{R}^{\times}\mathrm{Aut}(\mathfrak{g})$ on $\Omega\left(\mathfrak{g}\right) $ will be called the \textit{moduli space of left-invariant nondegenerate $2$-forms} and will be denoted by 
\[
\mathfrak{P}\Omega(\mathfrak{g}):=\mathbb{R}^{\times}\mathrm{Aut}(\mathfrak{g})\setminus \Omega (\mathfrak{g}):=\left\{ \mathbb{R}^{\times}\mathrm{Aut}(\mathfrak{g}).\omega \mid \omega \in \Omega (\mathfrak{g}) \right\}.
\]
\end{definition}
One can easily see that, if $\omega_1, \omega_2 \in \Omega (\mathfrak{g})$ are in the same $\mathbb{R}^{\times}\mathrm{Aut}(\mathfrak{g})$-orbit, then they are equivalent up to automorphism and scale. Therefore there is a surjection from the moduli space $\mathfrak{P}\Omega(\mathfrak{g})$ onto the quotient space
\[
 \Omega (\mathfrak{g}) / \text{``up to automorphism and scale''}.
\]
This would be not bijective, since $\omega$ and $-\omega$ are possibly not in the same $\mathbb{R}^{\times}\mathrm{Aut}(\mathfrak{g})$-orbit. In other words, the moduli space $\mathfrak{P}\Omega(\mathfrak{g})$ gives a finer partition than the latter quotient space. Note that the equivalence up to automorphism and scale preserves the closedness of 2-forms. Therefore, in order to search closed $2$-forms, we have only to consider this quotient space, and also the moduli space.

In the latter sections, instead of studying $\Omega (\mathfrak{g})$ directly we will focus on studying the moduli space: we want to find orbits that correspond to closed $2$-forms. As mentioned in the introduction, inspired in \cite{TakahiroHashinaga2016} and \cite{kubo2016} we introduce in the next subsection a strategy to study the moduli space and to find the orbits of closed $2$-forms. 
\subsection{Milnor frames procedure} 
Remember that given a symplectic vector space $(V,\omega)$  with $\mathrm{dim}V=2n$, we can always choose a basis $\left\{ x_1,\ldots,x_{2n} \right\}$ of $V$ such that for $i<j$
\[   
\omega(x_i,x_j) = 
     \begin{cases}
       1 & (\text{if} \; j=i+n), \\
       0 & (\text{all other cases}).
     \end{cases}
\]
This basis is called a symplectic basis.

Remember that we denote by $\omega_0$ the canonical symplectic form on a Lie algebra $\mathfrak{g}\cong \mathbb{R}^{2n}$. Let $\lbrace e_1,\ldots, e_{2n} \rbrace$ denote the corresponding symplectic basis.  To simplify the notation let us denote the orbit of $\mathbb{R}^{\times}\mathrm{Aut}(\mathfrak{g})$ through $\omega \in  \Omega(\mathfrak{g})$ by
\[
\left[ \omega \right]:=\left(\mathbb{R}^{\times}\mathrm{Aut}(\mathfrak{g})\right).\omega:=\left\{ \phi.\omega \mid \phi \in \mathbb{R}^{\times}\mathrm{Aut}(\mathfrak{g}) \right\}.
\]
\begin{definition}
A subset $U\subset \mathrm{GL}(2n,\mathbb{R})$ is called a \textit{set of representatives} of $\mathfrak{P}\Omega(\mathfrak{g})$ if it satisfies
\[
\mathfrak{P}\Omega(\mathfrak{g})=\left\{ \left[ h.\omega_0\right] \mid h \in U \right\}.
\]
\end{definition}
\begin{remark}
Of course the set of representatives is not unique. In practice we want the set of representatives to be as small as possible.
\end{remark}
Let $\left[\left[ g \right]\right]$ denote the double coset of $g \in \mathrm{GL}(2n,\mathbb{R}) $ defined by
\[
\left[\left[ g \right]\right]:= \mathbb{R}^{\times}\mathrm{Aut}(\mathfrak{g})\,g\, \mathrm{Sp}(2n,\mathbb{R}):= \left\{ \phi g s \mid \phi \in \mathbb{R}^{\times}\mathrm{Aut}(\mathfrak{g}), s\in \mathrm{Sp}_n(\mathbb{R}) \right\}.
\]
By standard theory of double coset spaces, we have a criterion for a set $U$ to be a set of representatives (we refer to \cite{TakahiroHashinaga2016}).
\begin{lemma}
Let $U\subset \mathrm{GL}(2n,\mathbb{R})$, and assume that for every $g\in \mathrm{GL}(2n,\mathbb{R}) $ there exists $h\in U$ such that $h \in \left[\left[ g \right]\right] $. Then $U$ is a set of representatives of $\mathfrak{P}\Omega(\mathfrak{g})$.
\end{lemma}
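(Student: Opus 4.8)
The plan is to reduce the statement to a purely formal property of double cosets, exploiting the homogeneous-space description $\Omega(\mathfrak{g}) \cong \mathrm{GL}(2n,\mathbb{R})/\mathrm{Sp}_n(\mathbb{R})$ recalled above. The two facts I would take as the foundation are that $\mathrm{GL}(2n,\mathbb{R})$ acts transitively on $\Omega(\mathfrak{g})$ and that the stabilizer of the canonical form $\omega_0$ is exactly $\mathrm{Sp}_n(\mathbb{R})$; concretely $s.\omega_0 = \omega_0$ for every $s \in \mathrm{Sp}_n(\mathbb{R})$, by the very definition of the symplectic group. Consequently the assignment $g \mapsto g.\omega_0$ is constant on cosets $g\,\mathrm{Sp}_n(\mathbb{R})$, and transitivity makes it surjective onto $\Omega(\mathfrak{g})$.

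First I would record how the two relevant group actions interact. Since $g.\omega(\cdot{,}\cdot) = \omega(g^{-1}(\cdot), g^{-1}(\cdot))$ is a left action, for $\phi \in \mathbb{R}^{\times}\mathrm{Aut}(\mathfrak{g})$, $s \in \mathrm{Sp}_n(\mathbb{R})$ and any $g \in \mathrm{GL}(2n,\mathbb{R})$ one computes
\[
(\phi g s).\omega_0 = \phi.\bigl(g.(s.\omega_0)\bigr) = \phi.(g.\omega_0).
\]
This single identity is the heart of the matter: it shows that the double coset $\left[\left[ g \right]\right] = \mathbb{R}^{\times}\mathrm{Aut}(\mathfrak{g})\,g\,\mathrm{Sp}_n(\mathbb{R})$ is carried, under $g \mapsto g.\omega_0$, precisely onto the $\mathbb{R}^{\times}\mathrm{Aut}(\mathfrak{g})$-orbit $\left[ g.\omega_0 \right]$. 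In other words, $g \mapsto \left[ g.\omega_0 \right]$ induces a well-defined surjection from the double coset space $\mathbb{R}^{\times}\mathrm{Aut}(\mathfrak{g}) \backslash \mathrm{GL}(2n,\mathbb{R}) / \mathrm{Sp}_n(\mathbb{R})$ onto $\mathfrak{P}\Omega(\mathfrak{g})$, which is all I actually need.

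With this in hand the lemma follows from two inclusions. For one direction, each $h.\omega_0$ lies in $\Omega(\mathfrak{g})$ because $\mathrm{GL}(2n,\mathbb{R})$ preserves nondegeneracy, so every $\left[ h.\omega_0 \right]$ is a genuine element of $\mathfrak{P}\Omega(\mathfrak{g})$ and $\{\left[ h.\omega_0 \right] \mid h \in U\} \subseteq \mathfrak{P}\Omega(\mathfrak{g})$. For the reverse direction I would take an arbitrary class in $\mathfrak{P}\Omega(\mathfrak{g})$, write it as $\left[ \omega \right]$ with $\omega = g.\omega_0$ by transitivity, and apply the hypothesis to obtain $h \in U$ with $h = \phi g s \in \left[\left[ g \right]\right]$. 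The displayed identity then gives $h.\omega_0 = \phi.(g.\omega_0) = \phi.\omega$, so $h.\omega_0$ and $\omega$ lie in the same $\mathbb{R}^{\times}\mathrm{Aut}(\mathfrak{g})$-orbit, whence $\left[ h.\omega_0 \right] = \left[ \omega \right]$. Thus $\mathfrak{P}\Omega(\mathfrak{g}) = \{\left[ h.\omega_0 \right] \mid h \in U\}$, which is exactly the definition of $U$ being a set of representatives.

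I do not anticipate a substantive obstacle: the content is entirely formal double-coset bookkeeping, the standard correspondence between orbits of a subgroup on $G/K$ and double cosets. The only points demanding genuine care are orientational, namely verifying that the $\mathrm{GL}(2n,\mathbb{R})$-action is a \emph{left} action (so that the $\mathbb{R}^{\times}\mathrm{Aut}(\mathfrak{g})$-factor sits on the left and the stabilizer $\mathrm{Sp}_n(\mathbb{R})$ on the right of the double coset) and confirming $s.\omega_0 = \omega_0$. Since the analogous statement in \cite{TakahiroHashinaga2016} is invoked as the template, I would keep the argument short and simply point to that reference for the general mechanism rather than reprove it from scratch.
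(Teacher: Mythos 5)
Your proposal is correct, and it is essentially the paper's approach made explicit: the paper gives no argument of its own here, merely invoking ``standard theory of double coset spaces'' with a pointer to \cite{TakahiroHashinaga2016}, and that standard theory is exactly the orbit/double-coset correspondence you spell out (transitivity of $\mathrm{GL}(2n,\mathbb{R})$ on $\Omega(\mathfrak{g})$, the stabilizer of $\omega_0$ being $\mathrm{Sp}_n(\mathbb{R})$, and the identity $(\phi g s).\omega_0 = \phi.(g.\omega_0)$). Your write-up is a valid, complete substitute for the citation.
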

Now we state a theorem for obtaining Milnor type frames in the symplectic case. 
\begin{theorem}\label{MilnorThm}
Let $U$ be a set of representatives of $\mathfrak{P}\Omega(\mathfrak{g})$. Then for every $\omega \in \Omega (\mathfrak{g})$ there exist $k>0$, $\phi \in\mathrm{Aut}(\mathfrak{g})$ and $h\in U$ such that $\lbrace \phi ge_1,\ldots,\phi ge_{2n} \rbrace$ is a symplectic basis with respect to $k\omega$.
\end{theorem}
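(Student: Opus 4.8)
The plan is to translate the assertion about symplectic bases into a statement purely about the $\mathrm{GL}(2n,\mathbb{R})$-action on $\Omega(\mathfrak{g})$, and then to read it off from the set-of-representatives property together with the fact that $\mathrm{Sp}_n(\mathbb{R})$ stabilizes $\omega_0$. First I would record the dictionary between symplectic bases and the action. For a linear isomorphism $T$, the basis $\{Te_1,\ldots,Te_{2n}\}$ is a symplectic basis with respect to a nondegenerate form $\alpha$ exactly when $\alpha(Te_i,Te_j)=\omega_0(e_i,e_j)$ for all $i,j$, i.e. when $T^{*}\alpha=\omega_0$. Since the action is $g.\omega(\cdot{,}\cdot)=\omega(g^{-1}(\cdot){,}g^{-1}(\cdot))$, one has $T^{*}\alpha=(T^{-1}).\alpha$, so this condition is equivalent to $\alpha=T.\omega_0$. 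Thus, writing $T=\phi h$, proving the theorem amounts to producing $k>0$, $\phi\in\mathrm{Aut}(\mathfrak{g})$ and $h\in U$ with
\[
k\omega=(\phi h).\omega_0 .
\]

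Next, since $\mathrm{GL}(2n,\mathbb{R})$ acts transitively on $\Omega(\mathfrak{g})$, I would choose $g\in\mathrm{GL}(2n,\mathbb{R})$ with $\omega=g.\omega_0$. Applying the criterion of the preceding Lemma to this $g$, the set of representatives $U$ provides $h\in U$ lying in the double coset $\left[\left[ g \right]\right]=\mathbb{R}^{\times}\mathrm{Aut}(\mathfrak{g})\,g\,\mathrm{Sp}_n(\mathbb{R})$; that is, $h=(c\psi)\,g\,s$ for some $c\in\mathbb{R}^{\times}$, $\psi\in\mathrm{Aut}(\mathfrak{g})$ and $s\in\mathrm{Sp}_n(\mathbb{R})$.

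Finally I would compute $(\psi^{-1}h).\omega_0$ directly. Since the scalar $cI$ is central, $\psi^{-1}(c\psi)gs=c\,gs$, and since $s\in\mathrm{Sp}_n(\mathbb{R})$ preserves $\omega_0$ we get $s.\omega_0=\omega_0$. Hence
\[
(\psi^{-1}h).\omega_0=(c\,g\,s).\omega_0=(cI).(g.\omega_0)=(cI).\omega=c^{-2}\,\omega ,
\]
where the last equality uses $(cI).\omega(\cdot{,}\cdot)=\omega(c^{-1}(\cdot){,}c^{-1}(\cdot))$. Taking $\phi:=\psi^{-1}\in\mathrm{Aut}(\mathfrak{g})$, $h\in U$ as above, and $k:=c^{-2}$, the displayed identity reads $(\phi h).\omega_0=k\omega$, which by the dictionary means that $\{\phi h e_1,\ldots,\phi h e_{2n}\}$ is a symplectic basis for $k\omega$.

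I expect the only care demanded to be in the bookkeeping of the action conventions, namely the contravariant identification $T^{*}\alpha=T^{-1}.\alpha$ and the fact that the central scalar acts as $c^{-2}$. The latter is in fact a pleasant feature rather than an obstacle: because $c\in\mathbb{R}^{\times}$ forces $k=c^{-2}>0$ automatically, the positivity of $k$ requires no separate argument. None of this is deep, and I do not anticipate any substantive difficulty beyond keeping these inverses and scalars consistent.
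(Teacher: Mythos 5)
Your proposal is correct and is essentially the paper's own argument: both unwind the definition of a set of representatives to express a positive multiple of $\omega$ as $(\phi h).\omega_0$, using that $\mathrm{Sp}_n(\mathbb{R})$ stabilizes $\omega_0$ and that the central scalar enters as a square (the paper writes $\omega=(c\phi h).\omega_0$ and gets $k=c^{2}$, you get $k=c^{-2}$; purely a bookkeeping difference). The only caveat is that you cite the paper's Lemma for the implication ``$U$ a set of representatives $\Rightarrow$ $U$ meets every double coset $\left[\left[ g \right]\right]$'', whereas the Lemma as stated is the converse; the implication you need is true and follows in one line from the definition of a set of representatives together with $\mathrm{Stab}(\omega_0)=\mathrm{Sp}_n(\mathbb{R})$, so this is a misattribution rather than a gap.
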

\begin{proof}
The proof is similar to \cite{TakahiroHashinaga2016}, Theorem 2.4. Take $\omega \in \Omega (\mathfrak{g})$. Let $\omega_0$ be the canonical symplectic form on $\mathfrak{g}$. From the definition of a set of representatives there exists $h\in U$ such that $\left[ \omega\right]=\left[h.\omega_0\right]$. Hence, there exist $c \in \,\mathbb{R}^{\times}$ and $\phi \in\mathrm{Aut}(\mathfrak{g})$ such that $\omega=(c\phi h). \omega_0$. Then we can easily check that  $\lbrace \phi he_1,\ldots,\phi he_{2n} \rbrace$ is a symplectic basis with respect to $c^2\omega$. Note that $k:=c^2>0$, which completes the proof. \end{proof}

The basis obtained in this theorem will be called \textit{Milnor frames}. Notice that if $U$ has a small number of parameters, the bracket relations of the Milnor frames will also be given in terms of a small set of parameters. In such cases, it becomes much easier to search for closed $2$-forms inside of $\Omega(\mathfrak{g})$. In the same way Milnor frames can also be useful to search for isotropic or Lagrangian subalgebras. We will show the previous procedure in two concrete examples in Section~\ref{examples}. Before working some concrete examples we introduce a tool that will be useful to calculate a smaller set of representatives and to obtain nice Milnor Frames.

\section{QR symplectic decomposition}\label{Symplecticdecomposition}

In order to obtain a nice set of representatives, 
it is useful to have general results for decomposing matrices using symplectic matrices.  
Some of the known results can be seen in \cite{BENNER2018407} and \cite{BUNSEGERSTNER198649}. In this section we obtain a slight modification.

First of all we set up some notations. 
We denote by $\mathrm{M}_{a \times b}(\mathbb{R})$ the set of all $a \times b$ real matrices, by $I_k$ the $k\times k $ identity matrix, and by $_{}^tM$ the transpose of a matrix $M$.

For $M \in \mathrm{M}_{2n \times 2n}(\mathbb{R})$ we frequently use block decompositions. In most of the matrices we use in this paper the size of each block can be understood from the shape of the matrix, if confusion is possible we will describe explicitly the size of the blocks. Also recall that $P \in \mathrm{GL}(n,\mathbb{R}) $ is called a permutation matrix if it induces
a permutation among the elements in the standard basis $\lbrace e_1,\ldots, e_n\rbrace$. For each
permutation $ \sigma \in S_n$ of degree $n$, we denote the corresponding permutation matrix
by $P_{\sigma}^{(n)}$, if there is no possible confusion we will simply denote it by $P_{\sigma}$ or $P$. For example, the permutation matrix corresponding to the cyclic permutation
$(n,n-1,\ldots,2,1)$ is given by
\[
P_{(n,n-1,\ldots,2,1)}^{(n)}=\left( \begin{array}{c|ccc} 
&&&  \\ 
&&I_{n-1}& \\
&&& \\\hline
1&&&
\end{array}\right).
\]

 We here 
make a list of well-known symplectic matrices 
in terms of the block decompositions.

\begin{prop} 
The next $2n \times 2n$ matrices are symplectic$:$ 
\begin{enumerate}
\item \underline{Type 1}
\begin{align*} 
\left( \begin{array}{c|c}
I_n & C \\ \hline
0 & I_n 
\end{array} \right) \; \text{or}  \; 
\left( \begin{array}{c|c}
I_n & 0  \\ \hline
C & I_n 
\end{array}\right)   \quad \mbox{with $C = {}^{t} C$ }. 
\end{align*} 
\item \underline{Type 2}
\begin{align*} 
\left( \begin{array}{c|c}
A & 0  \\ \hline
0 & {}^t A^{-1} 
\end{array} \right) 
\quad \mbox{for any $A \in \mathrm{GL}(n , \mathbb{R})$ }. 
\end{align*} 
\item \underline{Type 3}
\begin{align*} 
\left(\begin{array}{c|c}
P & 0  \\ \hline
0 & P 
\end{array}\right) \quad  \mbox{for any permutation matrix $P \in \mathrm{GL}(n , \mathbb{R})$} . 
\end{align*} 
\end{enumerate} 
\end{prop}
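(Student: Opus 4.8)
The plan is to verify directly that each of the three listed block matrices $A$ satisfies the defining relation ${}^{t}AJA = J$, where $J = \left(\begin{smallmatrix} 0 & I_n \\ -I_n & 0 \end{smallmatrix}\right)$. Since membership in $\mathrm{Sp}_n(\mathbb{R})$ is exactly this condition (together with invertibility, which will be transparent in each case), the proof reduces entirely to routine block multiplication. First I would record the general block computation: writing $A = \left(\begin{smallmatrix} P & Q \\ R & S \end{smallmatrix}\right)$ in $n\times n$ blocks, one computes
\[
{}^{t}AJA = \begin{pmatrix} {}^{t}P & {}^{t}R \\ {}^{t}Q & {}^{t}S \end{pmatrix} \begin{pmatrix} R & S \\ -P & -Q \end{pmatrix} = \begin{pmatrix} {}^{t}PR - {}^{t}RP & {}^{t}PS - {}^{t}RQ \\ {}^{t}QR - {}^{t}SP & {}^{t}QS - {}^{t}SQ \end{pmatrix},
\]
so that the symplectic condition ${}^{t}AJA = J$ is equivalent to the three relations ${}^{t}PR$ symmetric, ${}^{t}QS$ symmetric, and ${}^{t}PS - {}^{t}RQ = I_n$.

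Having isolated these three scalar relations, I would then simply substitute each of the three types into them. For Type 1, say $A = \left(\begin{smallmatrix} I_n & C \\ 0 & I_n \end{smallmatrix}\right)$, we have $P=I_n$, $Q=C$, $R=0$, $S=I_n$; then ${}^{t}PR = 0$ is symmetric, ${}^{t}QS = {}^{t}C$ is symmetric precisely because $C = {}^{t}C$, and ${}^{t}PS - {}^{t}RQ = I_n$. The second matrix of Type 1 is handled identically by symmetry of the roles. For Type 2 with $P = A$, $Q=0$, $R=0$, $S = {}^{t}A^{-1}$, the off-diagonal symmetry conditions are trivial and ${}^{t}PS - {}^{t}RQ = {}^{t}A\,{}^{t}A^{-1} = I_n$. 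For Type 3 with $P = R^{\mathrm{perm}}$ in both diagonal slots, I would use that a permutation matrix is orthogonal, so ${}^{t}PS = {}^{t}PP = I_n$ while the remaining blocks vanish.

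I do not expect any genuine obstacle here; the statement is a catalogue of standard symplectic matrices and the content is purely computational. The only point requiring the slightest care is making sure the symmetry hypothesis $C = {}^{t}C$ in Type 1 is exactly what is needed (and conversely is forced) by the relation ${}^{t}QS = {}^{t}SQ$, and that in Type 3 one genuinely uses orthogonality of permutation matrices rather than mere invertibility. A natural alternative, which I would mention only as a remark, is to observe that Types 1, 2, and 3 are the images of well-known subgroups (the abelian unipotent radical of the Siegel parabolic, the Levi factor $\mathrm{GL}(n,\mathbb{R})$, and the symmetric group) under standard embeddings into $\mathrm{Sp}_n(\mathbb{R})$; but since the paper only needs the verification itself, the direct block computation is the cleanest route.
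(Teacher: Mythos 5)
Your proposal is correct, and there is nothing in the paper to compare it against: the paper states this proposition without proof, as a catalogue of well-known symplectic matrices, its only commentary being the remark that Type~3 is a special case of Type~2 because ${}^tP_{\sigma}^{-1}=P_{\sigma}$ for a permutation matrix --- which is precisely the orthogonality observation you make. Your reduction of ${}^tAJA=J$ to the three block relations (${}^tPR$ symmetric, ${}^tQS$ symmetric, ${}^tPS-{}^tRQ=I_n$) is valid, since the remaining lower-left block relation is the negative transpose of the upper-right one and hence automatic, and your substitution of each type into these relations is accurate in every case; so your direct computation supplies the routine verification the paper chose to omit.
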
 

Note that Type~3 is just a special case of Type~2 because ${}^tP_{\sigma}^{-1}=P_{\sigma}$, but we write it separately because we use it frequently. The next lemma gives a key step to obtain our decomposition theorem. 

\begin{lemma}
\label{lemma:1003} 
For all $M \in \mathrm{GL} (2n,\mathbb{R})$, 
there exists a symplectic matrix $S \in \mathrm{Sp_n}(\mathbb{R})$ such that 
the left-upper $n \times n$ block of $M S$ is nonsingular. 
\end{lemma}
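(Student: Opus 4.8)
The plan is to translate the statement into a question about Lagrangian subspaces and then to produce the required symplectic matrix by completing a well-chosen Lagrangian to a symplectic basis.

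First I would write $M$ in block form $M=\left(\begin{smallmatrix} A & B \\ C & D\end{smallmatrix}\right)$ and record that the left-upper $n\times n$ block of $MS$ equals $R\,S_1$, where $R:=(A\ B)$ is the $n\times 2n$ matrix formed by the first $n$ rows of $M$ and $S_1$ is the $2n\times n$ matrix formed by the first $n$ columns of $S$. Since $M\in\mathrm{GL}(2n,\mathbb{R})$, the matrix $R$ has full rank $n$, so $W:=\ker R$ is an $n$-dimensional subspace of $\mathbb{R}^{2n}$. The product $R\,S_1$ is nonsingular precisely when the map it defines on $\mathbb{R}^n$ is bijective, i.e. when $\operatorname{Im}S_1$ is a complement of $W$. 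Now the first $n$ columns of a symplectic matrix are exactly the first half of a symplectic basis, hence span a Lagrangian subspace; conversely every Lagrangian subspace (with a chosen basis) arises this way, because a basis of a Lagrangian subspace can always be completed to a symplectic basis. Thus the lemma is equivalent to the purely linear-algebraic statement: the $n$-dimensional subspace $W$ admits a Lagrangian complement $L$ (a Lagrangian subspace with $L\cap W=0$).

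To prove existence of such an $L$ I would argue by induction on $n$, the case $n=1$ being trivial since every line in a symplectic plane is Lagrangian and any line different from $W$ works. For the inductive step I would distinguish two cases according to the radical $\operatorname{rad}(W):=W\cap W^{\perp}$. If $\operatorname{rad}(W)\neq 0$, choose $0\neq w\in\operatorname{rad}(W)$ and $u$ with $\omega(w,u)=1$; then $H:=\operatorname{span}(w,u)$ is a symplectic plane, $V=H\oplus H^{\perp}$, and one checks that $W':=W\cap H^{\perp}$ has dimension $n-1$ inside the symplectic space $H^{\perp}$ of dimension $2(n-1)$ (here $W\subseteq w^{\perp}$, so $W\cap H^{\perp}$ is the kernel of the nonzero functional $\omega(u,\cdot)|_{W}$). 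By induction $H^{\perp}$ contains a Lagrangian $L'$ with $L'\cap W'=0$, and then $L:=\mathbb{R}u\oplus L'$ is Lagrangian in $V$; using $w\in W^{\perp}$ one verifies $L\cap W=0$. If instead $\operatorname{rad}(W)=0$, then $W$ is a symplectic subspace and $V=W\oplus W^{\perp}$ is a symplectic direct sum with both summands of dimension $n$; choosing an anti-symplectomorphism $\psi\colon W\to W^{\perp}$ (a linear isomorphism with $\omega(\psi a,\psi b)=-\omega(a,b)$, which exists since the summands are symplectic of equal dimension), the graph $L:=\{a+\psi(a):a\in W\}$ is isotropic (the cross terms vanish and $\omega(\psi a,\psi b)=-\omega(a,b)$ cancels $\omega(a,b)$) and transverse to $W$, hence a Lagrangian complement.

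The routine part is the block-matrix bookkeeping and the completion of a Lagrangian basis to a symplectic one. The main obstacle is the existence of the Lagrangian complement: it is tempting to produce $S$ directly from the explicit symplectic families listed above (the ``rotations'' $\left(\begin{smallmatrix} \cos\theta\,I & \sin\theta\,I \\ -\sin\theta\,I & \cos\theta\,I\end{smallmatrix}\right)$, which change the upper-left block into $A\cos\theta-B\sin\theta$, or symplectic permutations), but simple examples show that no single such family suffices, since the pencil $\det(\cos\theta\,A-\sin\theta\,B)$ can vanish identically and no coordinate Lagrangian need be transverse to $W$. The inductive argument organised by the radical of $\omega|_{W}$ is what I expect to carry the proof through; the only delicate points are the dimension count $\dim(W\cap H^{\perp})=n-1$ and the verification that the anti-symplectomorphism makes the graph isotropic rather than merely transverse.
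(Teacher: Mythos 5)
Your proposal is correct, but it takes a genuinely different route from the paper's proof, so it is worth comparing the two. The paper works directly with block matrices and induction on the rank $r$ of the upper-left block: after normalizing that block to the standard rank-$r$ form by row and column operations, it multiplies $M$ on the right by explicit symplectic matrices of Type~1 and Type~2 to force a nonzero entry into the complementary $(n-r)\times(n-r)$ corner, strictly increasing the rank; iterating finishes the proof, and $S$ comes out as an explicit product of the listed generators, in the same matrix-decomposition style that the paper reuses right away in Theorem~\ref{qrdecomp} and in the orbit computations of Section~\ref{examples}. You instead translate the statement into invariant symplectic linear algebra: the upper-left block of $MS$ equals $RS_1$ with $R$ the top $n$ rows of $M$ and $S_1$ the first $n$ columns of $S$, so nonsingularity is exactly transversality of the Lagrangian $\mathrm{Im}\, S_1$ to $W=\ker R$, and the lemma becomes the statement that every $n$-dimensional subspace of a $2n$-dimensional symplectic vector space admits a transverse Lagrangian. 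Your induction through the radical is sound: when $\mathrm{rad}(W)\neq 0$, the count $\dim(W\cap H^{\perp})=n-1$ holds because $\omega(u,\cdot)|_W$ is nonzero at $w$, and $L=\mathbb{R}u\oplus L'$ meets $W$ trivially because $\omega(w,\cdot)$ vanishes on $W$ and on $L'$ but not at $u$, after which $L'\cap W'=0$ finishes; when $\mathrm{rad}(W)=0$ (which can only occur for $n$ even, a harmless restriction of that case), the graph of an anti-symplectomorphism $\psi\colon W\to W^{\perp}$ is isotropic of dimension $n$ and transverse to $W$ since $\psi(a)\in W\cap W^{\perp}=0$ forces $a=0$. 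Your side remark is also accurate: no single explicit family suffices, and indeed $W=\mathrm{span}\lbrace e_2,e_4\rbrace\subset\mathbb{R}^4$ meets every coordinate Lagrangian, so some genuinely non-coordinate construction is unavoidable. The trade-off: the paper's argument is elementary and algorithmic, producing $S$ explicitly in the form needed for its later representative computations, while yours isolates the geometric content, is coordinate-free, works over any field, and identifies the lemma with a standard fact of symplectic linear algebra; its one non-explicit step is completing a basis of the transverse Lagrangian to a symplectic basis in order to recover the matrix $S$.
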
 

\begin{proof} 
Take any $M \in \mathrm{GL} (2n,\mathbb{R})$, and denote its block decomposition by 
\begin{align} 
M = \left( \begin{array}{c|c}
A & B  \\ \hline
C & D
\end{array} \right),
\end{align} 
where $A,B,C,D$ are $n\times n$ matrices. Let $r := \mathrm{rank}(A)$. 
If $r = n$, then it is enough to put $S := I_{2n}$. 
Assume that $r < n$. 
Then we have only to show that 
there exists $S \in \mathrm{Sp_n}(\mathbb{R})$ such that 
the left-upper $n \times n$ block of $M S$ has rank bigger than $r$. 

First of all, 
since $r = \mathrm{rank}(A) < n$, 
it follows from linear algebra that 
there exist $g_1,g_2 \in \mathrm{GL}(n,\mathbb{R})$ such that 
\begin{align} 
g_1 A g_2 = \left( \begin{array}{c|c}
I_r & 0 \\ \hline
0 & 0 
\end{array} \right). 
\end{align} 
We define the following matrices: 
\begin{align} 
K := \left( \begin{array}{c|c}
g_1 & 0 \\ \hline
0 & I_n 
\end{array} \right) \in \mathrm{GL}(2n,\mathbb{R}) , \quad 
S_1 := \left( \begin{array}{c|c}
g_2 & 0 \\ \hline
0 & {}^t g_2^{-1} 
\end{array} \right) \in \mathrm{Sp_n}(\mathbb{R}) . 
\end{align} 
Then one can directly see that 
\begin{align} 
M_1 := K M S_1 = 
\left( \begin{array}{cc|cc} 
I_r & 0 & \ast & \ast \\ 
0 & 0 & \alpha & \beta \\ \hline 
\ast & \ast & \ast & \ast \\ 
\ast & \ast & \ast & \ast 
\end{array} \right) , 
\end{align} 
where $\alpha \in \mathrm{M}_{n-r \times r}(\mathbb{R})$
and $\beta \in \mathrm{M}_{n-r \times n-r}(\mathbb{R})$. 

We here claim that we can assume $\beta \neq 0$ without loss of generality. 
In order to prove this, assume $\beta = 0$. 
Then one has $\alpha \neq 0$, since $M_1$ is nonsingular. 
Therefore there exists $\gamma \in \mathrm{M}_{r \times n-r}(\mathbb{R})$ such that 
$\alpha \gamma \neq 0$. 
We define a symplectic matrix of Type~2 by 
\begin{align} 
S_2 := 
\left( \begin{array}{cc|cc} 
I_r & 0 & & \\ 
- {}^t \gamma & I_{n-r} & & \\ \hline 
& & I_r & \gamma \\ 
& & 0 & I_{n-r} 
\end{array} \right) \in \mathrm{Sp_n}(\mathbb{R}) . 
\end{align} 
A direct calculation yields that 
\begin{align} 
M_1 S_2 = 
\left( \begin{array}{cc|cc} 
I_r & 0 & \ast & \ast \\ 
0 & 0 & \alpha & \alpha \gamma \\ \hline 
\ast & \ast & \ast & \ast \\ 
\ast & \ast & \ast & \ast 
\end{array} \right) . 
\end{align} 
Recall that $\alpha \gamma \neq 0$, 
which completes the proof of the claim. 

We assume $\beta \neq 0$ from now on. 
Let us consider a symplectic matrix of Type~1 defined by 
\begin{align} 
S_3 := 
\left( \begin{array}{cc|cc} 
I_r & & & \\ 
& I_{n-r} & & \\ \hline 
& & I_r & \\ 
& I_{n-r} & & I_{n-r} 
\end{array} \right) \in \mathrm{Sp_n}(\mathbb{R}) . 
\end{align} 
Then one can directly see that 
\begin{align} 
M_2 := M_1 S_3 = 
\left( \begin{array}{cc|cc} 
I_r & 0 & \ast & \ast \\ 
0 & \beta & \alpha & \beta \\ \hline 
\ast & \ast & \ast & \ast \\ 
\ast & \ast & \ast & \ast 
\end{array} \right) . 
\end{align} 
Since $\beta \neq 0$, 
the rank of the left-upper block of $M_2$ is bigger than $r$. 
This is also true for 
\begin{align} 
K^{-1} M_2 = K^{-1} M_1 S_3 = K^{-1} (K M S_1) S_3 = M (S_1 S_3) , 
\end{align} 
since the multiplication by $K^{-1}$ does not change the rank of the left-upper block. 
This completes the proof. 
\end{proof}

We are now in the position to give our decomposition theorem. 
Recall that a matrix $T = (t_{i j}) \in \mathrm{M}_{n \times n}(\mathbb{R})$ 
is said to be strictly lower triangular (resp. lower triangular)
if $t_{i j} = 0$ holds for all $i \leq j$ (resp. $i < j$ ).

\begin{theorem}\label{qrdecomp}
For all $M \in \mathrm{GL} (2n,\mathbb{R})$, 
there exist a symplectic matrix $S \in \mathrm{Sp}_n(\mathbb{R})$ and 
a strictly lower triangular matrix $T \in \mathrm{M}_{n \times n}(\mathbb{R})$ such that 
\begin{align*} 
M S = \left( \begin{array}{c|c} 
I_n & T  \\ \hline
\ast & \ast 
\end{array} \right) . 
\end{align*} 

\end{theorem}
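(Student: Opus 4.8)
The plan is to construct the required symplectic matrix $S$ as a product $S = S_1 S_2 S_3$ of three symplectic factors, each carrying out one reduction, and to invoke Lemma~\ref{lemma:1003} for the first of them. First I would apply Lemma~\ref{lemma:1003} to obtain a symplectic matrix $S_1$ such that the left-upper block of $MS_1$ is nonsingular; write
\[
MS_1 = \left( \begin{array}{c|c} A & B \\ \hline C & D \end{array} \right), \qquad A \in \mathrm{GL}(n,\mathbb{R}).
\]
Next I would normalize this block to the identity using a Type~2 matrix. Taking $S_2 := \left(\begin{smallmatrix} A^{-1} & 0 \\ 0 & {}^tA \end{smallmatrix}\right) \in \mathrm{Sp}_n(\mathbb{R})$ (Type~2 with parameter $A^{-1}$), a direct block computation gives
\[
MS_1S_2 = \left( \begin{array}{c|c} I_n & B\,{}^tA \\ \hline \ast & \ast \end{array} \right);
\]
set $B' := B\,{}^tA$.

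The only step with real content is the third one, where I absorb $B'$ into a strictly lower triangular matrix by right-multiplying with a Type~1 matrix. For symmetric $C$ the matrix $S_3 := \left(\begin{smallmatrix} I_n & C \\ 0 & I_n \end{smallmatrix}\right)$ is symplectic, and right-multiplication fixes the left-upper block at $I_n$ while sending the right-upper block $B'$ to $B' + C$. Hence it suffices to find a symmetric $C$ with $B' + C$ strictly lower triangular, equivalently a strictly lower triangular $T$ with $T - B'$ symmetric. The condition $T - B' = {}^t(T - B')$ rearranges to $T - {}^tT = B' - {}^tB'$. The key elementary observation is that \emph{every} antisymmetric matrix $K$ satisfies $K = T_K - {}^tT_K$, where $T_K$ is the strictly lower triangular part of $K$: for $i>j$ the $(i,j)$ entry is $K_{ij}$, for $i<j$ it equals $-K_{ji} = K_{ij}$, and the diagonal vanishes. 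Applying this to $K := B' - {}^tB'$ and setting $T := T_K$, $C := T - B'$ yields a symmetric $C$ with $B' + C = T$ strictly lower triangular.

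Finally, putting $S := S_1 S_2 S_3$, which is symplectic as a product of symplectic matrices, gives $MS = \left(\begin{smallmatrix} I_n & T \\ \ast & \ast \end{smallmatrix}\right)$ with $T$ strictly lower triangular, as required. The hard part is Step~3, namely recognizing that the residual right-upper block can be corrected to strictly lower triangular form by a symmetric (hence admissible) shift; once the decomposition $K = T_K - {}^tT_K$ of an antisymmetric matrix is in hand, Steps~1 and~2 are routine and the remaining verifications are straightforward block matrix arithmetic.
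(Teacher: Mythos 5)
Your proof is correct and follows essentially the same route as the paper's: Lemma~\ref{lemma:1003} to make the left-upper block nonsingular, a Type~2 matrix to normalize it to $I_n$, and a Type~1 matrix with symmetric $C$ to make the right-upper block strictly lower triangular. The only difference is that you explicitly justify the existence of the symmetric correction $C$ via the identity $K = T_K - {}^tT_K$ for antisymmetric $K$, a detail the paper simply asserts, so your write-up is if anything slightly more complete.
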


\begin{proof}
Take any $M \in \mathrm{GL} (2n,\mathbb{R})$. 
One then knows by Lemma~\ref{lemma:1003} that there exists $S_1 \in \mathrm{Sp_n}(\mathbb{R})$ such that 
the left-upper $n \times n$ block of $M S_1$ is nonsingular. 
Denote by $A \in \mathrm{GL} (n,\mathbb{R})$ the left-upper block of $M S_1$, that is, 
\begin{align} 
M_1 := M S_1 = \left( \begin{array}{c|c} 
A & \ast  \\ \hline
\ast & \ast 
\end{array} \right) . 
\end{align} 
Then, by using a symplectic matrix of Type~2, we can transform $M_1$ into
\begin{align} 
M_2 := M_1 
\left( \begin{array}{c|c} 
A^{-1} & 0  \\ \hline
0 & {}^t A 
\end{array} \right) 
= 
\left( \begin{array}{c|c} 
I_n & B  \\ \hline
\ast & \ast 
\end{array} \right) . 
\end{align} 
For the block $B$, 
there exists a symmetric matrix $C \in \mathrm{M}_{n \times n} (\mathbb{R})$ 
such that $C + B$ is strictly lower triangular. 
Hence, by using a symplectic matrix of Type~1, we can transform $M_2$ into 
\begin{align} 
M_3 := M_2 
\left( \begin{array}{c|c} 
I_n & C \\ \hline
0 & I_n 
\end{array} \right) 
= 
\left( \begin{array}{c|c} 
I_n & C + B  \\ \hline
\ast & \ast 
\end{array} \right) , 
\end{align} 
which completes the proof. 
\end{proof} 

Our result is a modification of the so called QR unitary decomposition (see for example \cite{BENNER2018407} Theorem 2) as follows.
\begin{remark}[Unitary QR decomposition]

For any $A \in M_{2n\times 2n}(\mathbb{R})$, there always exists a decomposition of the form $A = RQ$, where 
\[Q \in \mathrm{Sp_n}(\mathbb{R}) \cap \mathrm{O}(2n,\mathbb{R}), \quad 
R=
\left( \begin{array}{c|c} 
R_{11} & R_{12} \\ \hline
* & * 
\end{array} \right),
\]
 with $R_{11}$ being lower triangular and $R_{12}$ strictly lower triangular. 
\end{remark}
Notice that in our case we are not restricted to unitary matrices and that the matrix we want to decompose is always nonsingular.

\section{Examples}\label{examples}
\subsection{The Lie group $G_{\mathbb{R}\mathrm{H^{2n}}}$}\label{Group1}

This group is the semidirect product of the abelian group $\mathbb{R}$ and $\mathbb{R}^{2n-1}$, 
where $\mathbb{R}$ acts on $\mathbb{R}^{2n-1}$ by $t.x:=e^tx$ $(t\in \mathbb{R}, x\in \mathbb{R}^{2n-1} )$. 
The corresponding Lie algebra of this group is 
$\mathfrak{g}_{\mathbb{R}\mathrm{H}^{2n}}\cong \mathbb{R}^{2n}=\mathrm{Span}\lbrace e_1,\ldots,e_{2n}\rbrace$, with bracket relations given by $[e_1,e_k]=e_{k}$ where $k=2,\ldots,2n$. For this Lie algebra it is known that (see \cite{kodama})

\begin{align} 
\mathbb{R}^{\times} \textrm{Aut} ( \mathfrak{g}_{\mathbb{R}\mathrm{H}^{2n}} ) 
=\left\{ \left( \begin{array}{c|ccc} 
\ast & 0 & \cdots & 0 \\ \hline 
\ast & & &  \\ 
\vdots & & \ast & \\ 
\ast & & &  \\ 
\end{array} 
\right) 
\in \mathrm{GL}(2n,\mathbb{R}) \right\},
\end{align} 
where the size of this block decomposition is $(1, 2n-1)$.

\begin{prop}\label{setrep1}
The action of 
$\mathbb{R}^{\times}\mathrm{Aut} ( \mathfrak{g}_{\mathbb{R}\mathrm{H}^{2n}})$ on  
$\Omega( \mathfrak{g}_{\mathbb{R}\mathrm{H}^{2n}})$ is transitive. A set of representatives $U$ for this
 action
is given by 
\[
U=\left\{ 
 I_{2n} 
\right\}.
\]

\end{prop}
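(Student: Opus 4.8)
The plan is to show that $\mathbb{R}^{\times}\mathrm{Aut}(\mathfrak{g}_{\mathbb{R}\mathrm{H}^{2n}})$ acts transitively on $\Omega(\mathfrak{g}_{\mathbb{R}\mathrm{H}^{2n}})$, which by the identification $\Omega(\mathfrak{g}) \cong \mathrm{GL}(2n,\mathbb{R})/\mathrm{Sp}_n(\mathbb{R})$ is equivalent to the double coset statement $\mathbb{R}^{\times}\mathrm{Aut}(\mathfrak{g})\cdot\mathrm{Sp}_n(\mathbb{R}) = \mathrm{GL}(2n,\mathbb{R})$. Given the criterion stated just before Theorem~\ref{MilnorThm}, it suffices to prove that for every $M \in \mathrm{GL}(2n,\mathbb{R})$ there exist $\phi \in \mathbb{R}^{\times}\mathrm{Aut}(\mathfrak{g})$ and $S \in \mathrm{Sp}_n(\mathbb{R})$ with $\phi M S = I_{2n}$; equivalently $M \in \mathbb{R}^{\times}\mathrm{Aut}(\mathfrak{g})\, \mathrm{Sp}_n(\mathbb{R})$ for every such $M$. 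This would show that $U = \{I_{2n}\}$ is a set of representatives, and that the single orbit is $[\omega_0]$.

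First I would apply Theorem~\ref{qrdecomp} (the QR symplectic decomposition) to the given $M$: there exist $S \in \mathrm{Sp}_n(\mathbb{R})$ and a strictly lower triangular $T$ such that
\[
MS = \left( \begin{array}{c|c} I_n & T \\ \hline \ast & \ast \end{array}\right).
\]
The task then reduces to showing that any matrix of this normalized form lies in $\mathbb{R}^{\times}\mathrm{Aut}(\mathfrak{g}_{\mathbb{R}\mathrm{H}^{2n}})$, after possibly absorbing further symplectic factors. The key structural fact I would exploit is the explicit shape of $\mathbb{R}^{\times}\mathrm{Aut}(\mathfrak{g}_{\mathbb{R}\mathrm{H}^{2n}})$ displayed above: it consists of all invertible matrices whose first row is $(\ast, 0, \ldots, 0)$, i.e. an arbitrary nonzero $(1,1)$ entry followed by zeros, with the remaining $(2n-1)$ columns below the first row completely free. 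This is an enormous (parabolic) group, and the real content is that it is large enough to sweep out, together with $\mathrm{Sp}_n(\mathbb{R})$, the whole general linear group.

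The main step, and the place I expect the difficulty to concentrate, is engineering the first row. Multiplication on the right by a symplectic matrix mixes the columns in a constrained way, so I would look for a symplectic $S'$ whose action can clear the off-diagonal entries of the first row of $MS$ while leaving the automorphism constraint satisfiable by a subsequent left multiplication. Concretely, I would track how the first row of $MS$ transforms and choose Type~1, Type~2, and Type~3 symplectic matrices (from the Proposition listing symplectic matrices) to bring the first row into the form $(\ast, 0, \ldots, 0)$; the strictly-lower-triangular normalization from Theorem~\ref{qrdecomp} should already put several entries in a convenient position. Once the first row has the required shape, the resulting matrix is by definition an element of $\mathbb{R}^{\times}\mathrm{Aut}(\mathfrak{g}_{\mathbb{R}\mathrm{H}^{2n}})$, so $M$ itself lies in the double coset of $I_{2n}$, establishing transitivity.

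Finally, I would record the geometric consequence: since the single $\mathbb{R}^{\times}\mathrm{Aut}(\mathfrak{g})$-orbit corresponds under Proposition~\ref{closedcondition} to whether $\omega_0$ is closed, one reads off that there is exactly one symplectic structure when $n=1$ and none when $n>1$. This last deduction is separate from transitivity and would be carried out by testing the cocycle condition $d\omega_0 = 0$ directly against the bracket relations $[e_1,e_k] = e_k$, but the Proposition as stated concerns only the transitivity and the set of representatives, so the core of the argument is the double coset computation above.
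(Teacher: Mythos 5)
Your overall strategy coincides with the paper's: reduce to showing $I_{2n}\in[[M]]$ for every $M\in\mathrm{GL}(2n,\mathbb{R})$, and start by applying Theorem~\ref{qrdecomp}. However, there is a genuine gap exactly at the point you flag as ``the main step'': you plan to hunt for further symplectic factors that clear the off-diagonal entries of the first row of $MS$, but you never carry this out --- and in fact no such step is needed, because the first row is \emph{already} in the required form. Write
\[
g_1 := MS = \left( \begin{array}{c|c} I_n & T \\ \hline \ast & \ast \end{array} \right),
\]
with $T$ strictly lower triangular. The first $n$ entries of the first row of $g_1$ are the first row of $I_n$, namely $(1,0,\ldots,0)$, and the last $n$ entries are the first row of $T$, which vanishes identically since $T$ is strictly lower triangular ($t_{1j}=0$ for all $j$). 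So the first row of $g_1$ has the form $(\ast,0,\ldots,0)$, which is precisely the defining condition for membership in $\mathbb{R}^{\times}\mathrm{Aut}(\mathfrak{g}_{\mathbb{R}\mathrm{H}^{2n}})$ as displayed in the paper; hence $g_1$ lies in that group. Since it is a group, $g_1^{-1}$ lies in it too, and $g_1^{-1}MS=I_{2n}\in[[M]]$. That is the entire proof.

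So the missing idea is a one-line observation: strict lower triangularity of $T$ annihilates the whole first row of the upper-right block, so the QR symplectic normal form is already an element of the parabolic group. Your proposed continuation --- choosing Type~1, 2, 3 symplectic matrices $S'$ to ``engineer the first row'' --- is not only unnecessary but somewhat circular: normalizing by right multiplication with symplectic matrices is exactly what Theorem~\ref{qrdecomp} has already done, and you give no argument that further symplectic factors could improve the first row while keeping the left block under control. As written, the decisive membership claim $MS\in\mathbb{R}^{\times}\mathrm{Aut}(\mathfrak{g}_{\mathbb{R}\mathrm{H}^{2n}})$ is never established, so the proof is incomplete; once the observation above is inserted, your argument and the paper's become identical.
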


\begin{proof}
Take any $g \in \mathrm{GL}(2n,\mathbb{R})$. 
By Theorem~\ref{qrdecomp} there exists a matrix $S\in \mathrm{Sp_n}(\mathbb{R}) $ such that 

\begin{align} 
[[g]] \ni g S = 
\left( \begin{array}{c|c} 
I_n & T  \\ \hline
\ast & \ast 
\end{array} \right) =: g_1 , 
\end{align} 
where $T$ is strictly lower triangular. 
Hence one has
$g_1 \in \mathbb{R}^{\times}\mathrm{Aut}\left(\mathfrak{g}_{\mathbb{R}\mathrm{H}^{2n}}\right) $, 
and also 
$g_1^{-1} \in \mathbb{R}^{\times}\mathrm{Aut}\left(\mathfrak{g}_{\mathbb{R}\mathrm{H}^{2n}}\right)$. 
This shows that
\begin{align} 
[[g]] \ni g_1^{-1} g_1 = I_{2n} , 
\end{align} 
which completes the proof. 
\end{proof}

\begin{theorem}[Milnor type]\label{milnorframes1}

For all $\omega \in \Omega \left(\mathfrak{g}_{\mathbb{R}\mathrm{H}^{2n}}\right)$, there exist $ t >0$ and a symplectic basis $\lbrace x_1,\ldots,x_{2n} \rbrace \subset \mathfrak{g}_{\mathbb{R}\mathrm{H}^{2n}}$ with respect to $t\omega$ such that
\[
  [x_1,x_k]=x_{k}  \; \; \; \text{for} \; k=2,\ldots,2n. 
  \]
  
\end{theorem}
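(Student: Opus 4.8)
The plan is to derive this Milnor-type theorem as a direct application of Theorem~\ref{MilnorThm} together with the set of representatives computed in Proposition~\ref{setrep1}. Since Proposition~\ref{setrep1} shows that $U = \lbrace I_{2n} \rbrace$ is a set of representatives for the action of $\mathbb{R}^{\times}\mathrm{Aut}(\mathfrak{g}_{\mathbb{R}\mathrm{H}^{2n}})$ on $\Omega(\mathfrak{g}_{\mathbb{R}\mathrm{H}^{2n}})$, the abstract machinery of Section~\ref{moduli} applies with the simplest possible representative set. The key observation is that the single parameter $h = I_{2n}$ means the Milnor frame will be built directly from the canonical symplectic basis $\lbrace e_1,\ldots,e_{2n}\rbrace$ transported by an automorphism, with no extra parameters entering the bracket relations.

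First I would invoke Theorem~\ref{MilnorThm} with this specific $U$. Given any $\omega \in \Omega(\mathfrak{g}_{\mathbb{R}\mathrm{H}^{2n}})$, the theorem produces $k > 0$, an automorphism $\phi \in \mathrm{Aut}(\mathfrak{g}_{\mathbb{R}\mathrm{H}^{2n}})$, and some $h \in U$ such that $\lbrace \phi h e_1,\ldots,\phi h e_{2n}\rbrace$ is a symplectic basis with respect to $k\omega$. Because $U = \lbrace I_{2n} \rbrace$, we must have $h = I_{2n}$, so the symplectic basis is simply $\lbrace \phi e_1,\ldots,\phi e_{2n}\rbrace$. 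I would then set $x_i := \phi e_i$ for $i = 1,\ldots,2n$ and $t := k$, which immediately gives a symplectic basis with respect to $t\omega$.

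The remaining step is to verify that the prescribed bracket relations hold for $\lbrace x_1,\ldots,x_{2n}\rbrace$. This is where I would use the fact that $\phi$ is a Lie algebra automorphism. Since the bracket relations of $\mathfrak{g}_{\mathbb{R}\mathrm{H}^{2n}}$ in the canonical basis are $[e_1,e_k]=e_k$ for $k=2,\ldots,2n$, applying $\phi$ and using $\phi[u,v]=[\phi u,\phi v]$ yields
\[
[x_1,x_k] = [\phi e_1, \phi e_k] = \phi[e_1,e_k] = \phi e_k = x_k,
\]
for each $k = 2,\ldots,2n$, exactly as claimed. Thus the Milnor frames inherit precisely the structure constants of the original basis.

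I do not anticipate a genuine obstacle here, since the theorem is essentially a corollary once Proposition~\ref{setrep1} is in hand; the content lies entirely in the preceding computation of the set of representatives, which reduces to the identity via the symplectic QR decomposition of Theorem~\ref{qrdecomp}. The only point requiring minor care is the bookkeeping of the scaling factor: Theorem~\ref{MilnorThm} outputs $k = c^2 > 0$, so one records $t := k$ and notes $t > 0$, matching the statement. No special structure of $\mathfrak{g}_{\mathbb{R}\mathrm{H}^{2n}}$ beyond the transitivity of the action and the explicit bracket relations is needed, and the argument is uniform in $n$.
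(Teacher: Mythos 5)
Your proposal is correct and follows essentially the same route as the paper's own proof: invoke Theorem~\ref{MilnorThm} with the one-element set of representatives $U=\lbrace I_{2n}\rbrace$ from Proposition~\ref{setrep1}, then transport the canonical bracket relations through the automorphism $\phi$. The bookkeeping of the scale factor and the verification $[x_1,x_k]=\phi[e_1,e_k]=x_k$ match the paper exactly, so nothing further is needed.
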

\begin{proof}
By Proposition~\ref{setrep1}, a set of representatives $U$ for the 
action of 
$\mathbb{R}^{\times} \textrm{Aut} ( \mathfrak{g}_{\mathbb{R}\mathrm{H}^{2n}} )$ 
on 
$\Omega( \mathfrak{g}_{\mathbb{R}\mathrm{H}^{2n}} )$ is given just by the identity $I_{2n}$. Take any $\omega \in \Omega(\mathfrak{g}_{\mathbb{R}\mathrm{H}^{2n}})$. Then, by Theorem~\ref{MilnorThm} there exists $\phi\in\mathrm{Aut}(\mathfrak{g}_{\mathbb{R}\mathrm{H}^{2n}})$ such that $\lbrace x_1:=\phi e_1, \ldots , x_n:=\phi e_n \rbrace$ is symplectic with respect to $t\omega$.  Hence, we only have to check the
bracket relations among them:
\[
[x_1,x_k]=[\phi e_1,\phi e_k]=\phi [e_1,e_k]=\phi e_{k}=x_{k} \; \; \text{for} \; k=2,\ldots,2n.
\]
This completes the proof.
\end{proof}
Now we can use Proposition~\ref{closedcondition} to search for closed $2$-forms.
\begin{coro}\label{closedcondition0}
Let $\omega \in \Omega(\mathfrak{g}_{\mathbb{R}\mathrm{H}^{2n}})$. Then $d\omega=0$ if and only if $n=1$.  
\end{coro}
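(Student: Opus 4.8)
The plan is to use the Milnor-type frame furnished by Theorem~\ref{milnorframes1}, which reduces the closedness question to a single, explicit computation. By that theorem, after rescaling $\omega$ by some $t>0$ we may assume there is a symplectic basis $\{x_1,\dots,x_{2n}\}$ satisfying $[x_1,x_k]=x_k$ for $k=2,\dots,2n$ and all other brackets vanishing. Since the closedness condition from Proposition~\ref{closedcondition} is invariant under scaling, $d\omega=0$ if and only if $d(t\omega)=0$, so I may test the cocycle condition on the rescaled form $t\omega$ using this adapted basis, where both the bracket and the symplectic form have a completely explicit shape.

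First I would write down what the symplectic basis means for the values of $\omega$ (absorbing $t$): $\omega(x_i,x_{i+n})=1$ for $i=1,\dots,n$ and $\omega(x_i,x_j)=0$ in all other cases with $i<j$. Next I would evaluate the trilinear map $d\omega(x_a,x_b,x_c)=\omega(x_a,[x_b,x_c])+\omega(x_c,[x_a,x_b])+\omega(x_b,[x_c,x_a])$ on basis triples. Because the only nonzero brackets involve $x_1$, every bracket $[x_b,x_c]$ is either $0$ (if neither index is $1$) or $\pm x_k$ (if one index is $1$), so $d\omega$ can only be nonzero on triples containing the index $1$. I would therefore fix $a=1$ (the other placements follow by the antisymmetry of $d\omega$) and compute $d\omega(x_1,x_j,x_k)$ for $2\le j<k\le 2n$, which collapses to $\omega(x_1,[x_j,x_k])+\omega(x_k,[x_1,x_j])+\omega(x_j,[x_k,x_1])=0+\omega(x_k,x_j)-\omega(x_j,x_k)=-2\,\omega(x_j,x_k)$. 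Thus the cocycle condition forces $\omega(x_j,x_k)=0$ for all $j,k\ge 2$.

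The crux is then to see what this constraint does to nondegeneracy. The vanishing $\omega(x_j,x_k)=0$ for all $2\le j,k\le 2n$ says that $\omega$ restricts to zero on the $(2n-1)$-dimensional subspace $\mathrm{Span}\{x_2,\dots,x_{2n}\}$. A nondegenerate alternating form cannot vanish identically on a subspace of dimension exceeding half the total dimension; here $2n-1 > n$ precisely when $n>1$. Hence for $n>1$ the closedness condition is incompatible with nondegeneracy, so no closed $\omega$ exists, while for $n=1$ we have $2n-1=1\le n$, no obstruction arises, and the standard symplectic form on the two-dimensional algebra is automatically closed (any $2$-form on a two-dimensional Lie algebra is closed for dimension reasons). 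This dichotomy gives $d\omega=0$ if and only if $n=1$.

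The main obstacle I anticipate is purely organizational rather than conceptual: one must make sure the antisymmetry argument genuinely exhausts all basis triples, so that the single family $d\omega(x_1,x_j,x_k)$ really captures every potentially nonzero component of $d\omega$. The clean way to guarantee this is to note that $d\omega$ is an alternating trilinear form, hence determined by its values on triples with $a<b<c$, and that any such triple with $a\ge 2$ gives $[x_b,x_c]=[x_a,x_b]=[x_c,x_a]=0$ and thus $d\omega=0$ automatically. Once that reduction is recorded, the remaining computation is the short one above, and the nondegeneracy count $2n-1>n\iff n>1$ finishes the argument.
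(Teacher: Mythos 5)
Your proposal is correct and takes essentially the same route as the paper: both invoke the Milnor frame of Theorem~\ref{milnorframes1}, discard the scale $t$, handle $n=1$ by dimension reasons, and evaluate the cocycle condition on triples containing $x_1$ to get $d\omega(x_1,x_j,x_k)=-2\,\omega(x_j,x_k)$. The only cosmetic difference is the final step: the paper plugs in the single triple $(x_1,x_2,x_{n+2})$, where the symplectic-basis normalization gives $\omega(x_2,x_{n+2})=1\neq 0$ outright, whereas you derive the vanishing of $\omega$ on $\mathrm{Span}\{x_2,\ldots,x_{2n}\}$ and conclude from the bound on dimensions of isotropic subspaces.
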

\begin{proof}
For any $\omega \in \Omega(\mathfrak{g}_{\mathbb{R}\mathrm{H}^{2n}})$, we can find a symplectic basis $\lbrace x_1,\ldots,x_{2n}\rbrace$ with respect to $t\omega$ such that the bracket relation are given by Theorem~\ref{milnorframes1}. 
Notice that when using the Milnor frames obtained in Theorem~\ref{milnorframes1} to search for symplectic forms, the parameter $t$ has no effect on the condition of Proposition~\ref{closedcondition}. Therefore, we can take $t=1$ without loss of generality.

If $n=1$ we automatically have $d\omega=0$. If $n>1$ we have $n+2\leq 2n$ and $x_{n+2}\in \mathfrak{g}_{\mathbb{R}\mathrm{H}^{2n}}$. Hence 
\begin{align*}
d\omega(x_1,x_2,x_{n+2})&=\omega(x_2,[x_{n+2},x_1])+\omega(x_{n+2},[x_1,x_2])\\
&=-2\omega(x_2,x_{n+2})=-2\neq 0.
\end{align*}
Therefore, no closed $2$-form corresponds to this case. This completes the proof.
\end{proof}
In terms of the above symplectic basis, one can easily see that  $\mathfrak{l}:=\text{Span}\left\{x_{2}\right\}$ defines a Lagrangian ideal of $\mathfrak{g}_{\mathbb{R}\mathrm{H}^{2n}}$.

\subsection{The Lie  group $H^3 \times \mathbb{R}^{2n-3}$ }\label{Group2}

This group is the direct product of the 3-dimensional Heisenberg Lie group $H^{3}$ 
and the abelian Lie group $\mathbb{R}^{2n-3}$. 
Denote by $\mathfrak{h}^3$ and $\mathbb{R}^{2n-3}$ the corresponding Lie algebras of 
$H^{3}$ and $\mathbb{R}^{2n-3}$, respectively. Then the corresponding Lie algebra of $H^3 \times \mathbb{R}^{2n-3}$ is  
\[
\mathfrak{h}^3\oplus \mathbb{R}^{2n-3}\cong \mathbb{R}^{2n}=\mathrm{Span}\lbrace e_1,\ldots,e_{2n}\rbrace
\]
with bracket relations given by $[e_1,e_2]=e_{2n}$. For this Lie algebra we have that (see \cite{kodama}) 
\begin{align} 
\mathbb{R}^{\times} \textrm{Aut} \left(\mathfrak{h}^3\oplus \mathbb{R}^{2n-3}\right) 
=\left\{ \left( 
\begin{array}{cc|ccc|c} 
\ast &\ast& 0 &\cdots& 0 & 0\\ 
\ast &\ast &0 &\cdots& 0 &0\\ \hline
\ast & \ast  && &  & 0\\
\vdots &\vdots&&\ast &  & \vdots\\ 
\ast &\ast&& & & 0\\ \hline
\ast &\ast&\ast&\cdots& \ast& \ast 
\end{array} 
\right) \in \mathrm{GL}(2n,\mathbb{R}) \right\},
\end{align} 
where the size of this block decomposition is $(2, 2n-3, 1)$. In order to obtain a set of representatives, we start with a calculation related to permutation matrices.
\begin{lemma}\label{simplepermlemma}
For any matrix of permutation $P_{\sigma}$, we can select
\[
K_1:=\left( \begin{array}{c|ccc} 
P_{\sigma_1}^{(2)} &&&  \\ \hline
&&& \\
&&P_{\sigma_2}^{(n-2)}& \\
&&&
\end{array} \right), \quad K_2:=
\left( \begin{array}{ccc|c} 
 &&&  \\ 
&P_{\sigma_3}^{(n-1)}&& \\
&&& \\\hline
&&&1
\end{array} \right)
\]
such that
\[
K_2P_{\sigma}{}^tK_1 \in \lbrace P_{(n,n-1,\ldots,2,1)}, I_n \rbrace.
\]
\end{lemma}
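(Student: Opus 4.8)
The plan is to read the statement as one about double cosets of permutation groups. Writing $P_\sigma e_j = e_{\sigma(j)}$, so that products of permutation matrices correspond to products of permutations, I observe that $K_2$ ranges over the subgroup $H_2$ of permutation matrices that fix the index $n$ and permute $\{1,\ldots,n-1\}$ arbitrarily, while ${}^tK_1 = K_1^{-1}$ ranges over the subgroup $H_1$ of permutation matrices that permute $\{1,2\}$ among themselves and $\{3,\ldots,n\}$ among themselves. Thus $K_2 P_\sigma\, {}^t K_1$ runs through the double coset $H_2\, P_\sigma\, H_1$, and the claim is precisely that every such double coset contains either $I_n$ or the cyclic matrix $P_{(n,n-1,\ldots,2,1)}$. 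In matrix language, left multiplication by $K_2$ performs an arbitrary permutation of the first $n-1$ rows while fixing the last row, and right multiplication by ${}^tK_1$ performs arbitrary permutations within the column block $\{1,2\}$ and within the column block $\{3,\ldots,n\}$.

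First I would locate the unique $1$ in the last row of $P_\sigma$, say in column $j_0=\sigma^{-1}(n)$, and split into cases according to which column block contains it. Since the last row is fixed by $H_2$ while its nonzero entry may only be slid within its own block by right multiplication, the dichotomy $j_0\in\{1,2\}$ versus $j_0\in\{3,\ldots,n\}$ is an invariant of the double coset; these are exactly the possibilities realized by $P_{(n,n-1,\ldots,2,1)}$, whose last-row $1$ sits in column $1$, and by $I_n$, whose last-row $1$ sits in column $n$. This is what makes $\{P_{(n,n-1,\ldots,2,1)},I_n\}$ the correct target set.

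In the case $j_0\in\{3,\ldots,n\}$ I would first apply a column permutation from $S_{\{3,\ldots,n\}}$ (the nontrivial block of ${}^tK_1$) to move this entry into column $n$. Then the last row has its single $1$ in column $n$ and, since each column carries exactly one $1$, column $n$ equals $e_n$; consequently the top-left $(n-1)\times(n-1)$ block is itself a permutation matrix. A suitable left multiplication by an element of $H_2$ then turns this block into $I_{n-1}$ without disturbing the last row or the last column, yielding $I_n$. In the case $j_0\in\{1,2\}$ I would instead use the $S_{\{1,2\}}$ block of ${}^tK_1$ to move the entry into column $1$; then column $1$ equals $e_n$, the submatrix on rows $1,\ldots,n-1$ and columns $2,\ldots,n$ is a permutation matrix, and a left multiplication from $H_2$ sorts it to $I_{n-1}$, producing exactly $P_{(n,n-1,\ldots,2,1)}$. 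Combining the chosen column permutation into a single $K_1$ and the chosen row permutation into a single $K_2$ gives the required matrices.

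All of the computations are elementary; the only point requiring care is the bookkeeping that the block-restricted permutations do not disturb the entries already normalized. Concretely, one must check that fixing the last-row entry in its canonical column forces the complementary column to be a standard basis vector, so that what remains really is an $(n-1)\times(n-1)$ permutation matrix, and that the subsequent left multiplication by $H_2$ preserves the already-normalized last row and column. I expect this verification to be the main, if modest, obstacle.
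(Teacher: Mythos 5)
Your proof is correct and follows essentially the same route as the paper's: locate the unique $1$ in the last row of $P_\sigma$, split into cases according to whether it lies in the column block $\{1,2\}$ or $\{3,\ldots,n\}$, slide it to column $1$ or column $n$ with ${}^tK_1$, and then sort the remaining $(n-1)\times(n-1)$ permutation block with $K_2$ to reach $P_{(n,n-1,\ldots,2,1)}$ or $I_n$ respectively. Your double-coset framing and the observation that the block containing the last-row $1$ is an invariant (explaining why both target matrices are needed) are nice additions that the paper leaves implicit, but the underlying construction is identical.
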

\begin{proof}
Let $P_{\sigma}$ be a permutation matrix. Recall that each column vector of $P_{\sigma}$ coincides with some $e_k$. First of all we consider the case that the matrix $P_{\sigma}$ is of the form
\[
P_{\sigma}= 
\left( \begin{array}{cc|ccc} 
 
&&&&  \\
&&&&  \\\hline
*&*&0&\cdots&0  \\
\end{array}\right).
\]
We can select a matrix $K_1$ of the above form such that 
\[
P_{\sigma}^{\prime}=P_{\sigma}{}^tK_1=\left( \begin{array}{cc|ccc} 
 
&&&&  \\
&&&&  \\\hline
1&0&0&\cdots&0  \\
\end{array}\right).
\]
Then there exists a matrix $K_2$ such that $K_2P_{\sigma}^{\prime}=P_{(n,n-1,\ldots,2,1)}$.

We next consider the remaining case, that is, 
\[
P_{\sigma}= 
\left( \begin{array}{cc|ccc} 
 
&&&&  \\
&&&&  \\\hline
0&0&\ast&\cdots& \ast  \\
\end{array}\right).
\]
In a similar way as the first case, we select a matrix $K_1$ such that 
\[P_{\sigma}^{\prime}=P_{\sigma}{}^tK_1= \left( \begin{array}{ccc|c} 
 
&&&  \\
&&&  \\\hline
0&\cdots&0&1  \\
\end{array}\right).
\]
Finally we select a matrix $K_2$ such that $K_2P_{\sigma}^{\prime}=I_n$, which completes the proof.
\end{proof}
In terms of the above lemma, we study double cosets. The following lemma gives a key step to obtain a set of representatives.
\begin{lemma}\label{lemmasetofrep}
For any $g \in \mathrm{GL}(2n,\mathbb{R})$, there exist $P \in \lbrace{I_n, P_{(n,n-1,\ldots,2,1)}^{(n)}\rbrace}$ and a strictly lower triangular matrix $T$ such that

\begin{align} 
[[g]] \ni 
\left( \begin{array}{c|c} 
I_n & T  \\ \hline
0 & P 
\end{array} \right).
\end{align}

\begin{proof}
Take $g\in \mathrm{GL}(2n,\mathbb{R})$. By Theorem~\ref{qrdecomp} there exists a matrix $S\in \mathrm{Sp_n}(\mathbb{R}) $ which changes the left-upper block to $I_n$, that is,  

\begin{align} 
[[g]] \ni g S = 
\left( \begin{array}{c|c} 
I_n & *  \\ \hline
C & \ast 
\end{array} \right) =: g_1.
\end{align} 
By using a certain matrix in $\mathbb{R}^{\times}\mathrm{Aut}\left(\mathfrak{h}^3\oplus \mathbb{R}^{2n-3}\right)$, we have
\[ 
[[g]] \ni 
\left( \begin{array}{c|c} 
I_n & 0  \\ \hline
-C & I_n 
\end{array} \right)
\left( \begin{array}{c|c} 
I_n & *  \\ \hline
C & \ast 
\end{array} \right)=
\left( \begin{array}{c|c} 
I_n & *  \\ \hline
0 & D
\end{array} \right)=:g_2.
\]
Note that the matrix $D$ is nonsingular. Then, by a result known as ``LPU decomposition'' (\cite{MatrixAnalysis} Theorem 3.5.11), there exist a lower triangular matrix $L$ and an upper matrix triangular matrix $U$ such that $P^{\prime}:=LDU$ is a permutation matrix. Therefore, we have 
\[ 
[[g]] \ni 
\left( \begin{array}{c|c} 
{}^tU &   \\ \hline
 & L 
\end{array} \right)g_2
\left( \begin{array}{c|c} 
{}^tU^{-1} &  \\ \hline
 & U 
\end{array} \right)=
\left( \begin{array}{c|c} 
I_n & \ast  \\ \hline
0 & P^{\prime} 
\end{array} \right)=:g_3.
\]
For the permutation matrix $P^{\prime}$, we can choose $K_1$ and $K_2$ given in Lemma~\ref{simplepermlemma} such that $P:=K_2P{}^tK_1 \in \lbrace{I_n, P_{(n,n-1,\ldots,2,1)}^{(n)}\rbrace}$. Note that
\[
\left( \begin{array}{c|c} 
K_1 &   \\ \hline
 & K_2 
\end{array} \right) \in  \mathbb{R}^{\times}\mathrm{Aut}\left(\mathfrak{h}^3\oplus \mathbb{R}^{2n-3}\right),
\]
because of the forms of $K_1$ and $K_2$. Hence, we have
\[
[[g]] \ni \left( \begin{array}{c|c} 
K_1 &   \\ \hline
 & K_2 
\end{array} \right)g_3\left( \begin{array}{c|c} 
K_1^{-1} &   \\ \hline
 & {}^{t}K_1
\end{array} \right)=\left( \begin{array}{c|c} 
I_n & \ast  \\ \hline
0 & P
\end{array} \right)=:g_4.
\]
Finally, by the same way as in the proof of Theorem~\ref{qrdecomp}, we can use a symplectic matrix $S^{\prime}$ of Type 1 to transform $g_4$ into the desired form 
\[
[[g]] \ni g_4S^{\prime} =\left( \begin{array}{c|c} 
I_n & T  \\ \hline
0 & P
\end{array} \right),
\]
where $T$ is strictly lower triangular. This completes the proof.
\end{proof}
\end{lemma}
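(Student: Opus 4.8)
The plan is to reduce an arbitrary $g \in \mathrm{GL}(2n,\mathbb{R})$ to the stated normal form through a sequence of multiplications, on the left by elements of $\mathbb{R}^{\times}\mathrm{Aut}(\mathfrak{h}^3\oplus\mathbb{R}^{2n-3})$ and on the right by symplectic matrices, since any such product stays in the double coset $[[g]]$. The strategy mirrors the proof of Theorem~\ref{qrdecomp}, but now we must also control the lower-left block using the specific (large) shape of the automorphism group.

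First I would apply Theorem~\ref{qrdecomp} itself: there is a symplectic $S$ with $gS = \left(\begin{smallmatrix} I_n & \ast \\ C & \ast \end{smallmatrix}\right)$, clearing the upper-left block to $I_n$. The key observation is that the block matrix $\left(\begin{smallmatrix} I_n & 0 \\ -C & I_n \end{smallmatrix}\right)$ lies in $\mathbb{R}^{\times}\mathrm{Aut}(\mathfrak{h}^3\oplus\mathbb{R}^{2n-3})$, because its shape (identity on the diagonal, arbitrary entries only in the bottom-left) is compatible with the prescribed pattern of the automorphism group. Multiplying on the left kills $C$, producing $\left(\begin{smallmatrix} I_n & \ast \\ 0 & D \end{smallmatrix}\right)$ with $D$ nonsingular (nonsingularity is forced since the whole matrix is invertible and the upper-left is $I_n$). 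The remaining task is to normalize $D$ into $\{I_n,\,P_{(n,n-1,\ldots,2,1)}^{(n)}\}$ without disturbing the $I_n$ in the upper-left corner.

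**The central device** is the LPU decomposition (\cite{MatrixAnalysis}, Theorem 3.5.11): write $P' = LDU$ as a permutation matrix with $L$ lower and $U$ upper triangular. Conjugating $g_2$ by the Type-2-style block matrix $\mathrm{diag}({}^tU, L)$ on the left and $\mathrm{diag}({}^tU^{-1}, U)$ on the right replaces $D$ by $P'$ while preserving the upper-left $I_n$; one must check that the left factor indeed sits in $\mathbb{R}^{\times}\mathrm{Aut}$, which again follows from its block-triangular shape. Then Lemma~\ref{simplepermlemma} supplies permutation matrices $K_1, K_2$ reducing any permutation $P'$ to one of the two canonical forms $I_n$ or $P_{(n,n-1,\ldots,2,1)}^{(n)}$; the crucial point is that $\mathrm{diag}(K_1, K_2)$ belongs to $\mathbb{R}^{\times}\mathrm{Aut}(\mathfrak{h}^3\oplus\mathbb{R}^{2n-3})$, which is precisely why Lemma~\ref{simplepermlemma} was set up with the block sizes $(2, n-2)$ and $(n-1, 1)$ matching the automorphism pattern. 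Finally, a Type-1 symplectic matrix (as in Theorem~\ref{qrdecomp}) makes the upper-right block strictly lower triangular.

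**The main obstacle** I anticipate is bookkeeping of the shape constraints: every left multiplier must genuinely lie in the specified automorphism group, and every right multiplier must be symplectic, and one must verify that later steps do not destroy the normalizations achieved earlier (in particular that clearing $C$, then fixing $D$, then applying $K_1,K_2$, then the final Type-1 move, each preserve the upper-left $I_n$ and the zero lower-left block). The interplay between Lemma~\ref{simplepermlemma} and the permutation produced by LPU is the subtle part — one needs the permutation to be reducible to exactly the two listed representatives, and this is exactly where the restricted column-pattern of the automorphism group (top-left $2\times 2$ and bottom-right corner being special) is used. Once these compatibility checks are in place, the result follows by stringing the reductions together.
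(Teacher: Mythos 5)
Your proposal is correct and follows essentially the same route as the paper's own proof: Theorem~\ref{qrdecomp} to make the upper-left block $I_n$, a lower-block-triangular automorphism to clear $C$, the LPU decomposition paired with a Type-2 symplectic factor to turn $D$ into a permutation matrix, Lemma~\ref{simplepermlemma} to reduce that permutation to one of the two representatives, and a final Type-1 symplectic matrix for the strictly lower triangular block. The compatibility checks you flag (each left factor lying in $\mathbb{R}^{\times}\mathrm{Aut}\left(\mathfrak{h}^3\oplus \mathbb{R}^{2n-3}\right)$, each right factor symplectic, and later steps preserving earlier normalizations) are precisely the verifications the paper carries out.
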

We are now in the position to give a set of representatives for the action of $\mathbb{R}^{\times}\mathrm{Aut}\left(\mathfrak{h}^3\oplus \mathbb{R}^{2n-3}\right)$. In fact, it consists of two or five points, depending on $n$.

\begin{prop}\label{setrep2}
A set of representatives $U$ for the action of 
$\mathbb{R}^{\times} \mathrm{Aut} ( \mathfrak{h}^3\oplus \mathbb{R}^{2n-3})$ 
on $\Omega( \mathfrak{h}^3\oplus \mathbb{R}^{2n-3})$ is given by the matrices $(2)$ if $n=2$, and by the matrices $(1)$--$(3)$ if $n>2$:
 \begin{align*}
(1)&I_{2n}+kE_{2,n+1},  &(2)& \left( \begin{array}{c|c} 
I_n & 0  \\ \hline
0 & P_{(n,1,\ldots,n-1)}
\end{array} \right)+kE_{2,n+1}\\  
(3)& \left( \begin{array}{c|c} 
I_n & 0  \\ \hline
0 & P_{(n,1,\ldots,n-1)}
\end{array} \right)+E_{3,n+1},
 \end{align*}
  where $k \in \lbrace0,1\rbrace.$
\end{prop}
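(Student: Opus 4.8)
The plan is to use Lemma~\ref{lemmasetofrep} as the starting point, which already reduces every double coset to one of the form
\[
\left( \begin{array}{c|c}
I_n & T  \\ \hline
0 & P
\end{array} \right)
\]
with $T$ strictly lower triangular and $P \in \lbrace I_n, P_{(n,n-1,\ldots,2,1)}^{(n)}\rbrace$. The task is then to simplify the block $T$ as much as possible, using the remaining freedom in $\mathbb{R}^{\times}\mathrm{Aut}(\mathfrak{h}^3\oplus \mathbb{R}^{2n-3})$ together with symplectic matrices. First I would understand what constraints the closedness-type structure and the shape of the automorphism group impose on $T$. The key observation should be that the bracket relation $[e_1,e_2]=e_{2n}$ makes most entries of $T$ removable: conjugating by Type~1 symplectic matrices $\left(\begin{smallmatrix} I_n & C \\ 0 & I_n\end{smallmatrix}\right)$ with $C$ symmetric changes $T$ by a symmetric matrix, and since $T$ is strictly lower triangular one can kill many of its entries. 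The entries that survive are exactly those that cannot be absorbed, and I expect the analysis to show that only the $(2,1)$-type entry (corresponding to $E_{2,n+1}$, i.e.\ the position paired with the Heisenberg bracket) and, when $n>2$, possibly one further entry ($E_{3,n+1}$) are genuinely nontrivial.

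The second and more delicate step is to normalize the surviving parameters using the action of $\mathbb{R}^{\times}\mathrm{Aut}(\mathfrak{h}^3\oplus \mathbb{R}^{2n-3})$ and the scaling factor $\mathbb{R}^{\times}$. Here the upper-left $2\times 2$ block and the bottom-right corner of the automorphism group act nontrivially on the relevant entries of $T$. I would track how a surviving parameter transforms under these automorphisms combined with scaling, and show that it can be rescaled to lie in $\lbrace 0,1\rbrace$ (this explains the parameter $k\in\lbrace 0,1\rbrace$ in representatives $(1)$ and $(2)$), while a genuinely different orbit (corresponding to $E_{3,n+1}$) appears only when $n>2$, since for $n=2$ there is no index $3$ distinct from $2n=4$ within the appropriate block. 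This case distinction is precisely where the $n=2$ versus $n>2$ dichotomy enters.

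The final step is to show that the two permutation choices $P=I_n$ and $P=P_{(n,n-1,\ldots,2,1)}^{(n)}$ do not always give distinct orbits, and to identify which combinations collapse. I would check directly that the representative with $P=I_n$ can be absorbed into the one with the cyclic permutation via a suitable automorphism when $n=2$, leaving only the family $(2)$; whereas for $n>2$ the permutation genuinely distinguishes orbits, and combined with the two values of $k$ and the extra representative $(3)$ this yields the stated count. Throughout, it suffices to exhibit, for each remaining $g_4$ from Lemma~\ref{lemmasetofrep}, an explicit element of $\mathbb{R}^{\times}\mathrm{Aut}(\mathfrak{h}^3\oplus \mathbb{R}^{2n-3})$ and a symplectic matrix carrying it into one of the listed matrices, and conversely to verify these representatives are pairwise inequivalent by a moduli invariant.

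The hard part will be the bookkeeping in the second step: carefully determining \emph{which} entries of the strictly lower triangular $T$ are truly invariant under the combined action, rather than merely reducible, and confirming that the surviving invariants are exactly captured by $k E_{2,n+1}$ and $E_{3,n+1}$. One must be careful not to overcount orbits (e.g.\ ensuring that scaling alone cannot identify $k=1$ with $k=0$, which it cannot since $0$ and $1$ lie in different scaling orbits) nor undercount them (ensuring $(1)$ and $(2)$ are genuinely distinct when $n>2$). I expect the cleanest way to certify inequivalence of the final representatives is to compute a simple numerical invariant of the orbit, such as the rank or a trace-type quantity built from $P$ together with the closedness datum, which separates $I_n$ from $P_{(n,n-1,\ldots,2,1)}^{(n)}$ and the two values of $k$.
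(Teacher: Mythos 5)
Your skeleton does match the paper's proof: start from Lemma~\ref{lemmasetofrep}, remove as much of $T$ as possible, normalize the surviving parameters to $\lbrace 0,1\rbrace$, and fold the $P=I_n$ case into the cyclic case when $n=2$. However, the mechanism you give for the central reduction step fails. Right multiplication by a Type~1 symplectic matrix $\left(\begin{smallmatrix} I_n & C \\ 0 & I_n \end{smallmatrix}\right)$ replaces $T$ by $T+C$ with $C$ symmetric, and since $\mathrm{M}_{n \times n}(\mathbb{R})$ is the \emph{direct sum} of the symmetric matrices and the strictly lower triangular ones, adding a symmetric matrix can never cancel a single strictly lower triangular entry: if $T+C$ is again strictly lower triangular, then $C=0$. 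That Type~1 freedom was already exhausted in Theorem~\ref{qrdecomp} when $T$ was brought to strictly lower triangular form; it has nothing left to give. In the paper the entries of $T$ are killed from the other side, by \emph{left} multiplication with automorphisms $D=\left(\begin{smallmatrix} I_n & A \\ 0 & I_n \end{smallmatrix}\right)\in\mathbb{R}^{\times}\mathrm{Aut}\left(\mathfrak{h}^3\oplus\mathbb{R}^{2n-3}\right)$, where $A$ need not be symmetric but is constrained by the block lower triangular shape of the automorphism group (rows $1$, $2$ and the last column of $A$ must vanish). Since $Dg_1$ has upper right block $T+AP$, which entries are removable depends on the permutation $P$: for $P=P_{(n,n-1,\ldots,2,1)}$ one can cancel exactly the lower right $(n-1)\times(n-1)$ block of $T$, leaving a first column $(x_2,\ldots,x_n)$ that is then normalized by further diagonal and elementary automorphisms into $E_{2,n+1}$, or into $kE_{3,n+1}$ when $x_2=0$ and $n>2$; for $P=I_n$ one can cancel everything except $t_{2,1}$. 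This $P$-dependence, which your proposal does not see, is precisely what produces the three families $(1)$--$(3)$ and the $n=2$ versus $n>2$ dichotomy; your ``symmetric $C$'' step, as stated, produces nothing.

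A second, smaller point: your final step, certifying that the listed matrices are pairwise inequivalent via a rank or trace type invariant (which you do not actually construct), is not part of the proof obligation. By the definition of a set of representatives and the unnumbered Lemma in Section~\ref{moduli}, it suffices to show that every double coset $[[g]]$ contains one of the listed matrices; disjointness of the resulting orbits is not claimed, and indeed the paper only asserts ``at most five orbits''. So that step can be dropped entirely, but the reduction of $T$ cannot be left resting on symmetric matrices.
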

\begin{proof}
Take any $g\in \mathrm{GL}(2n,\mathbb{R})$. It is enough to show that $[[g]]$ contains one of the matrices $(1)$--$(3)$ if $n>2$ and one of the matrices $(2)$ if $n=2$.  By Lemma~\ref{lemmasetofrep}, there exist $P \in \lbrace{I_n, P_{(n,n-1,\ldots,2,1)}^{(n)}\rbrace}$ and a strictly lower triangular matrix $T$ such that
\begin{equation}
[[g]] \ni \left( \begin{array}{c|c} 
I_n & T  \\ \hline
0 & P
\end{array} \right)=:g_1. 
\end{equation}
We consider two cases according to the choice of $P$, and some subcases.

Case 1: $P=P_{(n,n-1,\ldots,2,1)}^{(n)}$. We write the matrix $T\in M_{n\times n}(\mathbb{R}) $ as 
\[
T= \left( \begin{array}{c|c} 
0 & 0  \\ \hline
* & T^{\prime}
\end{array} \right),
\]
where $T^{\prime} \in M_{(n-1)\times (n-1)}(\mathbb{R})$ is strictly lower triangular. We define some matrices: 
\[
A:=\left( \begin{array}{c|c} 
0 & 0  \\ \hline
-T^{\prime} & 0
\end{array} \right) \in M_{n\times n}(\mathbb{R}), 
   \quad D:=  \left( \begin{array}{c|c} 
I_n & A  \\ \hline
 0& I_n 
\end{array} \right) \in \mathbb{R}^{\times}\mathrm{Aut}\left(\mathfrak{h}^3\oplus \mathbb{R}^{2n-3}\right). 
\]
We use the matrix $D$ to transform the matrix $g_1$ into
\[
[[g]] \ni  Dg_1 = \left( \begin{array}{c|c} 
I_n & T+AP  \\ \hline
 0& P \end{array}
 \right)=:g_2.
 \]  
By a direct calculation of $T+AP$, one can express $g_2$ as 
\begin{equation}
g_2=\left( \begin{array}{c|c} 
I_n & 0  \\ \hline
 0& P \end{array}
 \right)+x_2E_{2,n+1}+\cdots+x_nE_{n,n+1}. \
\end{equation}
We here consider the following three subcases.

Subcase $1$--$(i)$: $x_2\neq 0$. In this case, we can transform $x_2$ into 1 as follows. Define the following two diagonal matrices:
\[
V_1:=E_{1,1}+(1/x_2)E_{2,2}+E_{3,3}+\cdots+E_{n,n} , \quad V_2=P({}^tV_1^{-1}){}^tP.
\]
Then we can take matrices which makes $x_2$ into 1, that is, 
\begin{eqnarray*}
[[g]] &\ni& \left( \begin{array}{c|c} 
V_1 &  \\ \hline
 & V_2 
\end{array} \right)g_2\left( \begin{array}{c|c} 
V_1^{-1} &  \\ \hline
 & {}^tV_1 
\end{array} \right)\\
&=&\left( \begin{array}{c|c} 
I_n & 0  \\ \hline
 0& P \end{array}
 \right)+E_{2,n+1}+x_3E_{3,n+1}+\cdots+x_nE_{n,n+1}=:g_3.
\end{eqnarray*}
Now define the matrices:
\[
T_1:=I_{n}-x_3E_{3,2}-\cdots-x_nE_{n,2}\in M_{n\times n}(\mathbb{R}), \quad T_2:=P({}^tT_1^{-1}){}^tP.
\]
We can take matrices that eliminate the rest of parameters in $g_3$, that is,
\begin{equation}
[[g]] \ni \left( \begin{array}{c|c} 
T_1 &  \\ \hline
 & T_2 
\end{array} \right)g_3\left( \begin{array}{c|c} 
T_1^{-1} &  \\ \hline
 & {}^tT_1 
\end{array} \right)=\left( \begin{array}{c|c} 
I_n & 0  \\ \hline
 0& P \end{array}
 \right)+E_{2,n+1}.
\end{equation}
This coincides with the matrix $(2)$ of $k=1$ in the statement of this proposition.

Subcase $1$--$(ii)$: $x_2=0$ and $n=2$.  In this case we immediately obtain
\begin{equation}
g_2=\left( \begin{array}{c|c} 
I_2 & 0  \\ \hline
0 & P_{(2,1)}
\end{array} \right),
\end{equation}
which coincides with the matrix $(2)$ of $k=0$ in the statement of this proposition. 

Subcase $1$--$(ii)$: $x=0$ and $n>2$. Given a vector ${}^t(x_3,\ldots,x_n)$, it is well known that there exists a matrix $U \in \mathrm{GL}(n-2,\mathbb{R})$ such that
\[
U\cdot{}^t(x_3,\ldots,x_n)={}^t(k,0,\ldots,0),
\]
where $k\in \lbrace{0,1\rbrace}$. Define the matrices:
\[
H_1:=\left( \begin{array}{c|c} 
I_2& \\\hline
&U \\
\end{array} \right)\in \mathrm{GL}(n,\mathbb{R}), \quad H_2:=P({}^tH_1^{-1}){}^tP.
\]
Now we can consider the transformation 
\[
[[g]] \ni \left( \begin{array}{c|c} 
H_1 &  \\ \hline
 & H_2 
\end{array} \right)g_2\left( \begin{array}{c|c} 
H_1^{-1} &  \\ \hline
 & {}^tH_1 
\end{array} \right)=:g_3.
\]
The effect of the previous transformation is just the action of $U$ on the vector ${}^t(x_3,\ldots,x_n)$.
Therefore one has 
\begin{equation}
g_3=\left( \begin{array}{c|c} 
I_n & 0  \\ \hline
 0& P \end{array}
 \right)+kE_{3,n+1},   
\end{equation}
where $k\in\lbrace0,1 \rbrace$.  This coincides with the matrix $(2)$ of $k=0$, or with the matrix $(3)$ in the statement of this proposition. 

Case 2: $P=I_{n}$. We consider the following two subcases. 

Subcase $2$--$(i)$: $n=2$. We show that this case is equivalent to Case 1. We have
\[
g_1=\left( \begin{array}{c|c} 
I_2 & T  \\ \hline
 0& I_2 \end{array} \right).
\]
Define the matrices:
\[
R=\left( \begin{array}{c|c} 
P_{(2,1)} &   \\ \hline
 & I_2 
\end{array} \right)
 \in \mathbb{R}^{\times}\mathrm{Aut}\left(\mathfrak{h}^3\oplus \mathbb{R}^{1}\right), \quad  S=\left( \begin{array}{c|c} 
P_{(2,1)} &   \\ \hline
 & P_{(2,1)}
\end{array} \right) \in \mathrm{Sp_2}(\mathbb{R}).
\]
Now we can consider the transformation
\[
[[g]] \ni Rg_1S=\left( \begin{array}{c|c} 
I_2 & *  \\ \hline
 0& P_{(2,1)} 
\end{array} \right)=:g_2.
\]
In a similar way to the final part of Lemma~\ref{lemmasetofrep} we can use a symplectic matrix $S^{\prime}$ of Type 1 to transform $g_2$ into 
\[
[[g]] \ni g_2S^{\prime}=\left( \begin{array}{c|c} 
I_n & T  \\ \hline
 0& P_{(2,1)} 
\end{array} \right),
\]
where $T$ is strictly lower triangular. This matrix corresponds to Case 1. Therefore, for the case of $n=2$, we have obtained a set of representatives consisting of two points.

Subcase $2$--$(ii)$: $n>2$. We start from the matrix $g_1$, and consider the matrix $T=(t_{i,j})\in M_{n\times n}(\mathbb{R}) $. Let us put

\[
T^{\prime}:=T-t_{2,1}E_{2,1}.
\]
Since $T$ is lower triangular, one has
\[
 D^{\prime}:=  \left( \begin{array}{c|c} 
I_n & -T^{\prime}  \\ \hline
 0& I_n 
\end{array} \right) \in \mathbb{R}^{\times}\mathrm{Aut}\left(\mathfrak{h}^3\oplus \mathbb{R}^{2n-3}\right). 
\]
We use the matrix $D^{\prime}$ to transform the matrix $g_2$ into
\[[[g]] \ni D^{\prime}g_1=I_{2n}+t_{2,1}E_{2,n+1}=:g_2.
\]
Finally, in a similar way to Case 1, if $t_{2,1} \neq 0$ we can transform it easily into 1. After this transformation we obtain
\begin{equation}
[[g]] \ni I_{2n}+kE_{2,n+1},
\end{equation}
where $k\in \lbrace0,1\rbrace$. This coincides with the matrix $(1)$ in the statement of this proposition, which completes the proof.
\end{proof}

Given this set of representatives we can give a Milnor-type theorem.
\begin{theorem}[Milnor-type]\label{milnorframes2}

For all $\omega \in \Omega(\mathfrak{h}^3\bigoplus \mathbb{R}^{2n-3})$, there exist $t>0$, $ k\in\lbrace0,1\rbrace$ and a symplectic basis $\lbrace x_1,\ldots,x_{2n}\rbrace$ of $\mathfrak{h}^3\bigoplus \mathbb{R}^{2n-3}$ with respect to $t\omega$ such that the bracket relations are given by one of the following:
\begin{enumerate}
    \item $[x_1,x_2 ]= x_{2n}, \; [x_1,x_{n+1} ]= kx_{2n} \; (for \; n>2)$,
    \item $[x_1,x_2]= x_{n+1}-kx_{2}, \;[x_1,x_{n+1}]= kx_{n+1}-k^2x_{2}$, or
    \item $[x_1,x_2 ]= x_{n+1}-x_3 \; (for \; n>2)$.
    
\end{enumerate}
\begin{proof}
 Let $\lbrace e_1, \ldots , e_{2n} \rbrace$ be the canonical basis of $\mathfrak{h}^3\bigoplus \mathbb{R}^{2n-3}$, whose bracket relation is given by $[e_1,e_2]=e_{2n}$. In Proposition~\ref{setrep2} we obtained a set of representatives $U$ for the action of 
$\mathbb{R}^{\times} \textrm{Aut} ( \mathfrak{h}^3\oplus \mathbb{R}^{2n-3})$ 
on $\Omega( \mathfrak{h}^3\oplus \mathbb{R}^{2n-3})$. Take any $\omega \in \Omega(\mathfrak{h}^3\bigoplus \mathbb{R}^{2n-3})$. Then it follows from Theorem~\ref{MilnorThm} that there exist $u \in  U, t>0$ and $\phi \in \mathrm{Aut}(\mathfrak{h}^3\bigoplus \mathbb{R}^{2n-3})$ such that $\lbrace x_1:=\phi ue_1, \ldots , x_n:=\phi ue_n \rbrace$ is symplectic with respect to $t\omega$. Hence, we only have to check the
bracket relations among them. We study each case corresponding to the five matrices in $U$ case by case.

Case 1: we consider the case of $u=I_{2n}+kE_{2,n+1}$ with $k\in \lbrace0,1\rbrace$ and $n>2$, which corresponds to the matrices $(1)$ in Proposition~\ref{setrep2}. In this case one has
\[
ue_i = 
      \begin{cases}
       ke_2+e_{n+1} &\quad (i=n+1),\\
       e_i & \quad (i\neq n+1). \\
      \end{cases}
\]
Therefore, among $\lbrace ue_1, \ldots , ue_{2n} \rbrace$, the only possible nonzero bracket relations are 
\begin{align*}
    [ue_1,ue_2]&=[e_1,e_2]=e_{2n}=ue_{2n}, \\
    [ue_1,ue_{n+1}]&=[e_1,ke_2+e_{n+1}]=kue_{2n}.
\end{align*}
By applying the automorphism  $\phi$ to both sides, we obtain
\begin{align*}
    [x_1,x_2]&=[\phi ue_1,\phi ue_2]=\phi [ue_1,ue_2]=\phi ue_{2n}=x_{n}, \\
    [x_1,x_{n+1}]&=[\phi ue_1,\phi ue_{n+1}]=\phi[ ue_1,ue_{n+1}]=\phi kue_{2n}=kx_{2n}.
\end{align*}
Therefore the bracket relations are given by $(1)$ in the statement of this theorem.

Case 2: we consider the case of
\[ \left( \begin{array}{c|c} 
I_n & 0  \\ \hline
0 & P_{(n,1,\ldots,n-1)}
\end{array} \right)+kE_{2,n+1} \quad (\mathrm{with} \; \; k\in\lbrace0,1\rbrace),\]
which corresponds to the matrices $(2)$ in Proposition~\ref{setrep2}. In this case one has
\[
ue_i = 
      \begin{cases}
       e_i & \quad (i\leq n),\\
       ke_2+e_{2n} & \quad (i=n+1),\\
       e_{i-1} & \quad (i>n+1).
      \end{cases}
\]
Therefore, among $\lbrace ue_1, \ldots , ue_{2n} \rbrace$, the only possible nonzero bracket relations are 
\begin{align*}
    [ue_1,ue_2]&=[e_1,e_2]=e_{2n}=ue_{n+1}-kue_2, \\
    [ue_1,ue_{n+1}]&=[e_1,ke_2+e_{n+1}]=kue_{n+1}-k^2ue_2.
\end{align*}
By applying the automorphism  $\phi$ to both sides, we obtain the bracket relations given by $(2)$ in the statement of this theorem.

Case 3: we consider the case of $n>2$ and 
\[ \left( \begin{array}{c|c} 
I_n & 0  \\ \hline
0 & P_{(n,1,\ldots,n-1)}
\end{array} \right)+E_{3,n+1},\]
which corresponds to the matrices $(3)$ in Proposition~\ref{setrep2}. In this case one has
\[
ue_i = 
      \begin{cases}
       e_i & \quad (i\leq n),\\
       e_3+e_{2n} & \quad (i=n+1),\\
       e_{i-1} & \quad (i>n+1).
      \end{cases}
\]
Therefore, among $\lbrace ue_1, \ldots , ue_{2n} \rbrace$, the only possible nonzero bracket relations are 
\[
    [ue_1,ue_2]=[e_1,e_2]=e_{2n}=ue_{n+1}-ue_3.
\]
By applying the automorphism  $\phi$ to both sides, we obtain the bracket relations given by $(3)$. This completes the proof.
\end{proof}  
\end{theorem}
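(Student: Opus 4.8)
The plan is to feed the set of representatives from Proposition~\ref{setrep2} into the general Milnor-frame machinery of Theorem~\ref{MilnorThm}, and then read off the bracket relations one representative at a time. First I would fix the canonical basis $\lbrace e_1,\ldots,e_{2n}\rbrace$ with the single defining relation $[e_1,e_2]=e_{2n}$. For an arbitrary $\omega \in \Omega(\mathfrak{h}^3\oplus \mathbb{R}^{2n-3})$, Theorem~\ref{MilnorThm} supplies $t>0$, an automorphism $\phi$, and a representative $u\in U$ such that $\lbrace x_i := \phi u e_i\rbrace$ is a symplectic basis with respect to $t\omega$. Since the matrices $(1)$--$(3)$ of Proposition~\ref{setrep2} exhaust $U$ (two of them when $n=2$, five of them when $n>2$), it remains only to compute the brackets among the $x_i$ for each of these matrices.

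The observation that makes every case short is that $\mathfrak{h}^3\oplus \mathbb{R}^{2n-3}$ has a single nonzero structure constant. By bilinearity one gets $[ue_i,ue_j]=(u_{1i}u_{2j}-u_{2i}u_{1j})\,e_{2n}$, so only those pairs for which one vector carries an $e_1$-component and the other an $e_2$-component can contribute; in each representative the first column is $e_1$, so every surviving bracket is a bracket with $ue_1=e_1$. Because $\phi$ is an automorphism, $[x_i,x_j]=\phi[ue_i,ue_j]$, so I would compute $[ue_i,ue_j]$ first and apply $\phi$ termwise at the end. For each representative the routine is: (i) write down $ue_i$ column by column, which is where the permutation block $P_{(n,1,\ldots,n-1)}$ and the elementary term $kE_{2,n+1}$ or $E_{3,n+1}$ enter; (ii) evaluate the surviving brackets via $[e_1,e_2]=e_{2n}$; and (iii) re-express the resulting $e_{2n}$ in terms of the new basis vectors $ue_k$, which is the step that generates the correction terms $-kx_2$ and $-x_3$.

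Carrying this out, for $u=I_{2n}+kE_{2,n+1}$ one has $ue_{n+1}=ke_2+e_{n+1}$ and $ue_i=e_i$ otherwise, giving $[ue_1,ue_2]=ue_{2n}$ and $[ue_1,ue_{n+1}]=k\,ue_{2n}$, hence relations $(1)$. For the two matrices built on the permutation block one has $ue_i=e_i$ for $i\leq n$, $ue_i=e_{i-1}$ for $i>n+1$, and $ue_{n+1}=ke_2+e_{2n}$ (respectively $ue_{n+1}=e_3+e_{2n}$); solving for $e_{2n}$ gives $e_{2n}=ue_{n+1}-k\,ue_2$ (respectively $e_{2n}=ue_{n+1}-ue_3$), and applying $\phi$ produces relations $(2)$ and $(3)$.

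The only genuine place to slip is step (i): correctly decoding how $P_{(n,1,\ldots,n-1)}$ permutes $\lbrace e_{n+1},\ldots,e_{2n}\rbrace$ and keeping the index bookkeeping consistent, together with the observation that the identity $ue_3=e_3$ used in case $(3)$ requires $3\leq n$, i.e.\ $n>2$, which is exactly why that relation is stated only in that range. Everything after the columns are written down is elementary algebra driven by the single bracket $[e_1,e_2]=e_{2n}$.
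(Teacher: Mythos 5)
Your proposal is correct and follows essentially the same route as the paper: feed the representatives of Proposition~\ref{setrep2} into Theorem~\ref{MilnorThm}, compute $ue_i$ column by column for each representative, evaluate the brackets using the single relation $[e_1,e_2]=e_{2n}$, re-express $e_{2n}$ in the basis $\lbrace ue_k\rbrace$, and transport by $\phi$. Your explicit bilinearity formula $[ue_i,ue_j]=(u_{1i}u_{2j}-u_{2i}u_{1j})\,e_{2n}$ is just a compact packaging of the paper's observation that only brackets against $ue_1=e_1$ survive, and all resulting relations $(1)$--$(3)$ match.
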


Now we can use Proposition~\ref{closedcondition} to search for closed $2$-forms and, obtain the existence and uniqueness result as follows.

\begin{coro}\label{symplecticstructures2}
There exists a unique symplectic structure on $\mathfrak{h}^3\bigoplus \mathbb{R}^{2n-3}$ up to automorphism and scale. In fact, $\omega \in \Omega(\mathfrak{h}^3\oplus \mathbb{R}^{2n-3})$ satisfies $d\omega=0$ if and only if there there exist $t>0$ and a symplectic basis $\lbrace x_1,\ldots,x_{2n} \rbrace \subset \mathfrak{h}^3\oplus \mathbb{R}^{2n-3}$ with respect to $t\omega$ such that
\[
[x_1,x_2 ]= x_{n+1}.
\]
\end{coro}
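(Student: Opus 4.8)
The plan is to use the Milnor-type theorem just obtained (Theorem~\ref{milnorframes2}) together with the closedness criterion of Proposition~\ref{closedcondition} to test each of the three families of bracket relations, exactly as was done for $\mathfrak{g}_{\mathbb{R}\mathrm{H}^{2n}}$ in Corollary~\ref{closedcondition0}. Since equivalence up to automorphism and scale preserves closedness, and since Theorem~\ref{milnorframes2} guarantees that every $\omega \in \Omega(\mathfrak{h}^3\oplus\mathbb{R}^{2n-3})$ is, after rescaling by some $t>0$, realized by a symplectic basis $\{x_1,\dots,x_{2n}\}$ with bracket relations in one of the three listed forms, it suffices to decide closedness case by case. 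As in Corollary~\ref{closedcondition0}, the scale $t$ plays no role in the condition of Proposition~\ref{closedcondition}, so I would normalize $t=1$ throughout.

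First I would record that, for a symplectic basis, $\omega(x_i,x_j)$ equals $1$ precisely when $j=i+n$ and vanishes otherwise (for $i<j$), so the cocycle condition $d\omega(x_a,x_b,x_c)=\omega(x_a,[x_b,x_c])+\omega(x_c,[x_a,x_b])+\omega(x_b,[x_c,x_a])=0$ reduces to a finite check involving only the few nonzero brackets. For case $(2)$ with $k=0$, the single relation is $[x_1,x_2]=x_{n+1}$, and here one checks directly that $d\omega=0$: the only brackets feeding into the cocycle live among $x_1,x_2,x_{n+1}$, and $\omega(x_2,x_{n+1})=0$ while the terms pairing $x_{n+1}$ against $x_1,x_2$ also vanish, so this produces a genuine symplectic structure. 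This establishes existence.

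Next I would rule out the remaining cases as non-closed, which is where the only real bookkeeping lies. For case $(1)$ with $n>2$, I expect the obstruction to appear through a triple such as $(x_1,x_2,x_{n+2})$ or a triple detecting the bracket $[x_1,x_{n+1}]=kx_{2n}$; one computes $d\omega$ on a well-chosen triple and finds a nonzero value (for instance a term like $\omega(x_2,x_{2n})$ or its analogue evaluating to $\pm1$), so no such $\omega$ is closed. For case $(2)$ with $k=1$ and for case $(3)$, I would similarly evaluate $d\omega$ on the triple $(x_1,x_2,\,x_{\ell})$ where $x_\ell$ is the symplectic partner of whichever basis vector appears with a nonzero coefficient in the right-hand side (e.g.\ $x_2$ in $[x_1,x_2]=x_{n+1}-x_3$ forces testing against $x_{n+2}$, the partner of $x_3$), producing a nonzero coefficient and hence $d\omega\neq0$.

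The main obstacle is purely organizational rather than conceptual: I must verify that in each non-closed case the candidate triple on which $d\omega$ is evaluated genuinely lies in the algebra (this requires $n>2$ in cases $(1)$ and $(3)$, which is consistent with the hypotheses of Theorem~\ref{milnorframes2}) and that the resulting value is provably nonzero independent of the unspecified entries. Once these evaluations are assembled, the closedness holds in exactly one family, namely $[x_1,x_2]=x_{n+1}$; combined with the fact from Theorem~\ref{milnorframes2} that every $2$-form is equivalent to one of these normal forms, this yields both existence and uniqueness up to automorphism and scale, completing the proof.
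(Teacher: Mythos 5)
Your overall strategy coincides with the paper's proof: normalize $t=1$, run the three bracket families of Theorem~\ref{milnorframes2} through the cocycle condition of Proposition~\ref{closedcondition}, verify closedness for $[x_1,x_2]=x_{n+1}$ (case (2) with $k=0$), and exclude the rest. Your treatment of the closed case and your obstruction idea for case (2) with $k=1$ are sound (your triple $(x_1,x_2,x_{n+2})$ indeed gives $d\omega(x_1,x_2,x_{n+2})=k\neq 0$ there; the paper instead uses $(x_1,x_{n+1},x_{n+2})$, which gives $k^2$). However, the two remaining exclusion steps, as you have written them, would fail because the specific triples and terms you name all evaluate to zero.

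In case (1) the bracket is $[x_1,x_2]=x_{2n}$, and for a symplectic basis $\omega(x_i,x_j)=1$ exactly when $j=i+n$ (for $i<j$). Hence both of your candidates vanish: $d\omega(x_1,x_2,x_{n+2})=\omega(x_{n+2},x_{2n})=0$ and likewise $\omega(x_2,x_{2n})=0$, since the symplectic partner of $x_{2n}$ is $x_n$, not $x_2$ or $x_{n+2}$. The triple that works, and the one the paper uses, is $(x_1,x_2,x_n)$: since $n>2$ one has $[x_1,x_n]=[x_2,x_n]=0$, so $d\omega(x_1,x_2,x_n)=\omega(x_n,x_{2n})=1\neq 0$. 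Note that this is precisely your own stated heuristic (pair against the symplectic partner of a vector appearing on the right-hand side of a bracket) applied correctly; your case-(1) candidates do not follow it. The same slip occurs in case (3): you call $x_{n+2}$ ``the partner of $x_3$'', but the partner of $x_3$ is $x_{n+3}$, and indeed $d\omega(x_1,x_2,x_{n+2})=\omega(x_{n+2},x_{n+1}-x_3)=0$, which excludes nothing, whereas $d\omega(x_1,x_2,x_{n+3})=\omega(x_{n+3},x_{n+1}-x_3)=1\neq 0$, which is the paper's computation. With these two corrections your argument becomes exactly the proof in the paper.
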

\begin{proof}
For any $\omega \in \Omega(\mathfrak{h}^3\oplus \mathbb{R}^{2n-3})$ we can find a symplectic basis $\lbrace x_1,\ldots,x_{2n}\rbrace$ with respect to $t\omega$ such that the bracket relations are given by $(1)$--$(3)$ in Theorem~\ref{milnorframes2}. As we did in Proposition~\ref{closedcondition0} we take $t=1$, without loss of generality.  We check the closed condition for each case.  

Bracket relations $(1)$. In this case we have $n>2$, and 
\[
[x_1,x_2 ]= x_{2n}, \; \; \; [x_1,x_{n+1}]=kx_{2n}.
\]
Note that $[x_1,x_n]=0$ since $n\neq 2$. Then one can see that 
\begin{align*}
d\omega(x_1,x_2,x_n)&=\omega(x_1,[x_2,x_n])+\omega(x_n,[x_1,x_2])+\omega(x_2,[x_n,x_1])\\
&=\omega(x_n,x_{2n})=1\neq 0.
\end{align*}
Therefore, no closed $2$-form corresponds to this case.

Bracket relations $(2)$. Recall that it satisfies
\[
[x_1,x_2]= x_{n+1}-kx_{2}, \; \; \; [x_1,x_{n+1}]= kx_{n+1}-k^2x_{2}.
\]
One can easily see that
\[
d\omega(x_1,x_{n+1},x_{n+2})=\omega(x_{n+2},[x_1,x_{n+1}])=k^2.
\]
Hence we have only to consider the case of $k=0$. In this case, the bracket relations reduce to 
\[
[x_1,x_2 ]= x_{n+1}.
\]
For this bracket relations it is easy to check that $d\omega=0$. In particular, for each $j \in \lbrace{3,\ldots, 2n\rbrace}$, we have   
\[
d\omega(x_1,x_{2},x_j)=\omega(x_j,[x_1,x_2])=\omega(x_j,x_{n+1})=0.
\]
This bracket corresponds to a closed $2$-form, as desired.

Bracket relations $(3)$. In this case we have $n>2$, and
\[
[x_1,x_2 ]= x_{n+1}-x_3.
\]
Then one can easily see that
\[
d\omega(x_1,x_2,x_{n+3})=\omega(x_{n+3},[x_1,x_2])=1\neq 0.
\]
Therefore, no closed $2$-form corresponds to this case. This completes the proof.
\end{proof}

In terms of the above symplectic basis, one can easily see the existence of a Lagrangian normal subgroup in $H^3\times \mathbb{R}^{2n-3}$, equipped with a left-invariant symplectic structure.
\begin{coro}
For every symplectic structure on $\mathfrak{h}^3\oplus \mathbb{R}^{2n-3}$ there exists a Lagrangian ideal.

\begin{proof}
Let $\omega$ be a symplectic structure on $\mathfrak{h}^3\oplus \mathbb{R}^{2n-3}$. Then by Corollary~\ref{symplecticstructures2}, there exist $t>0$ and a symplectic basis $\lbrace x_1,\ldots,x_{2n} \rbrace$ with respect to $t\omega$ such that
\[
[x_1,x_2]=x_{n+1}.
\]
Define $\mathfrak{l}=\lbrace x_2,\ldots, x_n, x_{n+1}\rbrace$. Then one can easily check that $\mathfrak{l}$ is a Lagrangian ideal.
\end{proof}
\end{coro}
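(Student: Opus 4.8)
The plan is to reduce everything to the normal form already established in Corollary~\ref{symplecticstructures2}. Given an arbitrary symplectic structure $\omega$ on $\mathfrak{g}:=\mathfrak{h}^3\oplus\mathbb{R}^{2n-3}$, I would first invoke that corollary to produce a scalar $t>0$ and a symplectic basis $\{x_1,\ldots,x_{2n}\}$ with respect to $t\omega$ in which the \emph{only} nonvanishing bracket among basis vectors is $[x_1,x_2]=x_{n+1}$. Since $t\neq 0$, a subspace is isotropic, Lagrangian, or an ideal for $t\omega$ exactly when it is so for $\omega$; hence I may work throughout with $t\omega$ and this single explicit relation.

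With the basis in hand, I would propose the candidate $\mathfrak{l}:=\mathrm{Span}\{x_2,\ldots,x_n,x_{n+1}\}$, a subspace of dimension $n$. The verification then separates into two short and independent checks. For isotropy, I would recall that in a symplectic basis the pairing $(t\omega)(x_i,x_j)$ with $i<j$ is nonzero precisely when $j=i+n$. The indices occurring in $\mathfrak{l}$ are $\{2,\ldots,n,n+1\}$: the symplectic partner of each $x_k$ with $2\le k\le n$ carries index $k+n\in\{n+2,\ldots,2n\}$, while the partner of $x_{n+1}$ is $x_1$. None of these partners lies in $\mathfrak{l}$, so every pairing among the spanning vectors vanishes and $\mathfrak{l}$ is isotropic. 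Because $\dim\mathfrak{l}=n=\tfrac12\dim\mathfrak{g}$, an isotropic subspace of this maximal dimension satisfies $\mathfrak{l}=\mathfrak{l}^\perp$ and is therefore Lagrangian.

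For the ideal property I would read off $[\mathfrak{g},\mathfrak{l}]$ directly from the single surviving bracket. Each generator $x_k$ with $k\in\{3,\ldots,n,n+1\}$ is central, so it produces no nonzero bracket with anything; the lone contribution is $[x_1,x_2]=x_{n+1}$, which already lies in $\mathfrak{l}$. Thus $[\mathfrak{g},\mathfrak{l}]\subseteq\mathrm{Span}\{x_{n+1}\}\subseteq\mathfrak{l}$, so $\mathfrak{l}$ is an ideal, and together with the previous paragraph this exhibits the desired Lagrangian ideal.

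I do not anticipate a substantive obstacle: the genuinely hard content is concentrated in Corollary~\ref{symplecticstructures2}, which collapses every symplectic structure to one bracket relation. The only place demanding a little care is the index bookkeeping in the isotropy step, namely verifying that $\mathfrak{l}$ contains none of the symplectic partners of its own spanning vectors; but once the pairs $x_k\leftrightarrow x_{k+n}$ are listed explicitly, both the isotropy and the ideal verifications are immediate.
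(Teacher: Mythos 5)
Your proposal is correct and follows exactly the paper's own argument: invoke Corollary~\ref{symplecticstructures2} to get the normal form $[x_1,x_2]=x_{n+1}$, take $\mathfrak{l}=\mathrm{Span}\{x_2,\ldots,x_n,x_{n+1}\}$, and verify it is a Lagrangian ideal. The only difference is that you spell out the isotropy and ideal checks that the paper leaves as ``one can easily check,'' and your bookkeeping there is accurate.
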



\begin{thebibliography}{99} 

\bibitem{SymplecticLieGroups}
Baues, O., Cort\'es V. :
\textit{Symplectic Lie groups}.
Ast\'erisque \textbf{379} (2016).

\bibitem{BENNER2018407}
Benner P., Fa\ss bender H., Yang C.:
\textit{Some remarks on the complex J-symmetric eigenproblem.}
Linear Algebra Appl. \textbf{544} (2018), 407--442.


\bibitem{BUNSEGERSTNER198649} 
Bunse-Gerstner, A.: 
\textit{Matrix factorizations for symplectic QR-like methods}. 
Linear Algebra Appl. \textbf{83} (1986), 49--77. 

\bibitem{Solvable} 
Campoamor-Stursberg, R.: 
\textit{Symplectic forms on six-dimensional real solvable Lie algebras. {I}}. 
Algebra Colloq. \textbf{16} (2009), 253--266. 


\bibitem{SymplecticHom} 
Chu, B.Y.:
\textit{Symplectic homogeneous spaces}. 
Trans. Amer. Math. Soc. \textbf{197} (1974), 145--159. 


\bibitem{Filiform} 
G\'{o}mez, J. R., Jim\'enez-Merch\'en, A., Khakimdjanov, Y.: 
\textit{Symplectic structures on the filiform Lie algebras}. 
J. Pure Appl. Algebra \textbf{156} (2001), 15--31. 

\bibitem{Lauret} 
Lauret, J.: \textit{Degenerations of Lie algebras and geometry of Lie groups}. Differential Geom. Appl. \textbf{18} (2003), 177--194.

\bibitem{Nilpotent} 
Michel G.,  Abdelkader B.: 
\textit{Sur les alg\`ebres de {L}ie munies d'une forme symplectique}. 
Rend. Sem. Fac. Sci. Univ. Cagliari \textbf{57} (1987) 85--97.

\bibitem{kodama} 
Kodama, H., Takahara, A., and Tamaru, H.: 
\textit{The space of left-invariant metrics on a {L}ie group up to isometry and scaling}. 
Manuscripta Math. \textbf{135} (2011), 229--243. 


\bibitem{MatrixAnalysis} 
Horn, R.A., Horn, R.A., Johnson, C.R.: 
\textit{Matrix Analysis}. 
Cambridge University Press (1990).

\bibitem{TakahiroHashinaga2016} 
Hashinaga, T., Tamaru, H.,Terada, K.: 
\textit{Milnor-type theorems for left-invariant Riemannian metrics on Lie groups}
J. Math. Soc. Japan \textbf{68} (2016), 669-684.  


\bibitem{kubo2016} 
Kubo, A., Onda, K., Taketomi, Y., Tamaru, H.: 
\textit{On the moduli spaces of left-invariant pseudo-Riemannian metrics on Lie groups}. 
Hiroshima Math. J. \textbf{46} (2016), 357--374. 


\bibitem{Milnorpaper} 
Milnor, J.: 
\textit{Curvatures of left invariant metrics on Lie groups}. 
Advances in Math. \textbf{21} (1976), 293--329.

\bibitem{Ovando} 
Ovando, G. 
\textit{Four dimensional symplectic Lie algebras}. 
Beitr\"{a}ge Algebra Geom. \textbf{47} (2006), 419--434.

\bibitem{wolf}
Wolf, Joseph A. \textit{Finiteness of orbit structure for real flag manifolds}. Geometriae Dedicata. \textbf{3} (1974), 377--384.



\end{thebibliography}
\end{document}